\tikzset{Box/.style={very thick, rounded corners}}
\tikzset{marked/.style={star, star point height = .75mm, star points =5, fill=black,minimum size=2mm, inner sep=0mm} }
\tikzset{verythickline/.style = {line width=7pt}}
\tikzset{thickline/.style = {line width=5pt}}
\tikzset{medthick/.style = {line width=3pt}}
\tikzset{med/.style = {line width=2pt}}
\tikzset{count/.style = {fill=white,circle,draw,thin, inner sep=2pt}}
\tikzset{rcount/.style = {fill=white,rectangle,draw,thin,inner sep=2pt, rounded corners}}
\tikzset{cpr/.style = {draw,fill=white,rectangle,thin, rounded corners}}
\newtheorem{thm}{Theorem}[section]
\newtheorem{lem}[thm]{Lemma}
\newtheorem{cor}[thm]{Corollary}
\theoremstyle{definition}
\newtheorem{defn}[thm]{Definition}
\newtheorem{rem}[thm]{Remark}
\newcommand{\bC}{{\mathbb{C}}}
\newcommand{\bN}{{\mathbb{N}}}
\newcommand{\bR}{{\mathbb{R}}}
\newcommand{\bZ}{{\mathbb{Z}}}
\newcommand{\A}{{\mathcal{A}}}
\newcommand{\B}{{\mathcal{B}}}
\renewcommand{\H}{{\mathcal{H}}}
\newcommand{\K}{{\mathcal{K}}}
\renewcommand{\P}{{\mathcal{P}}}
\renewcommand{\phi}{\varphi}
\newcommand{\qqand}{\qquad\text{and}\qquad}
\newcommand{\vs}{{\mathrm{vs}}}
\newcommand{\tr}{{\mathrm{tr}}}
\newcommand{\op}{{\mathrm{op}}}
\newcommand{\NC}{{\mathrm{NC}}}
\newcommand{\BNC}{{\mathrm{BNC}}}
\author{Paul Skoufranis}
\address{Department of Mathematics and Statistics, York University, 4700 Keele Street, Toronto, Ontario, M3J 1P3, Canada}
\email{pskoufra@yorku.ca}
\subjclass[2020]{46L53, 46L54}
\date{\today}
\keywords{Bi-free probability, random matrices, quantum channels, central limit theorem}
\thanks{This research was completed with the support of NSERC (Canada) grant RGPIN-2024-06269.}
\begin{document}

\nocite{*}

\title[CLT for Tensor Products of Free Variables via Bi-Free]{Central Limit Theorem for Tensor Products of Free Variables via Bi-Free Independence}

\begin{abstract}
In this paper, a connection between bi-free probability and the asymptotics of random quantum channels and tensor products of random matrices is established.  Using bi-free matrix models, it is demonstrated that the spectral distribution of certain self-adjoint quantum channels and tensor products of random matrices  tend to a distribution that can be obtained by an averaged sum of products of bi-freely independent pairs.  Subsequently, using bi-free techniques, a Central Limit Theorem for such operator is established.
\end{abstract}

\maketitle

\section{Introduction}

One essential concept in Quantum Information Theory is that of quantum channels which are mathematical/physical models for how quantum information can be transmitted.  Given a quantum channel $\Phi : M_n(\bC) \to M_n(\bC)$, the (non-unique) Kraus decomposition of $\Phi$ implies there exists $\{K_j\}^d_{j=1} \subseteq M_n(\bC)$ such that
\[
\Phi(X) = \sum^d_{j=1} K_j X K_j^*
\]
for all $X \in M_n(\bC)$.  Viewing $M_n(\bC) \cong \bC^n \otimes \bC^n$, it is natural to examine $\Phi$ through its Kraus operator
\[
K_\Phi = \sum^d_{j=1} K_j \otimes \overline{K_j}
\]
where $\overline{K_j}$ denotes the entry-wise conjugation of $K_j$.  Furthermore, $\{K_j\}^d_{j=1}$ can be assumed to be self-adjoint when $\Phi$ is self-adjoint in the sense that $\tr(X\Phi(Y)) = \tr(\Phi(X)Y)$ for all $X,Y \in M_n(\bC)$.

In \cite{LSY2023} the asymptotic behaviour of random self-adjoint quantum channels was examined. This was accomplished by fixing a $d$, taking the above $\{K_j\}^d_{j=1}$ to be random self-adjoint $n \times n$ matrices, and examining the behaviour of the Kraus operator as $n$ tends to infinity.  In particular, the following was demonstrated.

\begin{thm}[\cite{LSY2023}*{Theorem 2.2}]\label{thm:quantum-channel}
Let $W_1, \ldots, W_d$ be centred, self-adjoint random $n \times n$ matrices such that
\begin{itemize}
\item $W_j$ converges weakly in probability and expectation to $\mu_j$ as $n$ tends to infinity for all $j$, 
\item $\{W_j\}^d_{j=1}$ are in probability and expectation asymptotically free as $n$ tends to infinity, and 
\item $\mathbb{E}\left(W_j \otimes \overline{W_j}\right)$ converges weakly to 0 as $n$ tends to infinity for all $j$.
\end{itemize}
Let $\Phi_d$ be the quantum channel with Kraus operator
\[
\sum^d_{j=1} \frac{1}{\sqrt{d}} W_j \otimes \frac{1}{\sqrt{d}}\overline{W_j},
\]
let $a_1, \ldots, a_d$ be freely independent random variables with respect to $\varphi$ with distributions $\mu_1, \ldots, \mu_d$ respectively, and let
\[
\Delta_{d,n} = \frac{1}{\sqrt{d}} \sum^d_{j=1}\left(W_j \otimes \overline{W_j} -\mathbb{E}\left(W_j \otimes \overline{W_j}\right)\right).
\]
Then in probability and expectation the distribution of $\Delta_{d,n}$ (and thus the spectral distribution of $\Phi_d- \mathbb{E}(\Phi_d)$) tends to the distribution of $\frac{1}{\sqrt{d}} \sum^d_{j=1} a_j \otimes a_j$ with respect to $\varphi \otimes \varphi$ as $n$ tends to infinity.  

Moreover, if $d = d(n)$ diverges as $n$ tends to infinity and $\{W_j\}^d_{j=1}$ are normalized, independent, and identically distributed, then in probability and expectation the distribution of $\Delta_{d,n}$ (and thus the spectral distribution of $\Phi_d- \mathbb{E}(\Phi_d)$)  tends to the centred semicircular distribution of variance 1 as $n$ tends to infinity.  
\end{thm}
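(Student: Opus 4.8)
The plan is to deduce the \emph{Moreover} statement from the first part of the theorem, for which the remaining content is to understand the limiting operator $S_d=\frac{1}{\sqrt d}\sum_{j=1}^d a_j\otimes a_j$ as the number $d$ of summands diverges. By the first part, for each fixed $d$ the distribution of $\Delta_{d,n}$ converges (in probability and expectation) to the $\varphi\otimes\varphi$-distribution of $S_d$ as $n\to\infty$; since the $W_j$ are normalized, centred, and identically distributed, the $a_j$ are freely independent, identically distributed, centred, and of variance one. Thus it suffices to show that the moments of $S_d$ converge to those of the centred semicircular distribution of variance one, and to upgrade the two successive limits to the single diagonal limit $d=d(n)\to\infty$.

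First I would reduce the moments of $S_d$ to a purely free computation. Because $\varphi\otimes\varphi$ is a product state and $c_j:=a_j\otimes a_j$ satisfies $c_{j_1}\cdots c_{j_k}=(a_{j_1}\cdots a_{j_k})\otimes(a_{j_1}\cdots a_{j_k})$, one obtains
\[
\varphi\otimes\varphi\!\left(S_d^{\,k}\right)=d^{-k/2}\sum_{\mathbf j\in\{1,\dots,d\}^k}\varphi\!\left(a_{j_1}\cdots a_{j_k}\right)^2 .
\]
Next I would expand each inner factor by the free moment--cumulant formula, writing $\varphi(a_{j_1}\cdots a_{j_k})=\sum_{\pi}\prod_{V\in\pi}\kappa_{|V|}$, the sum over $\pi\in\NC(k)$ with $\pi\le\ker(\mathbf j)$. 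Since the $a_j$ are centred of variance one, $\kappa_1=0$ and $\kappa_2=1$, so every block that contributes has size at least two; in particular $\NC(k)$ may be restricted to partitions all of whose blocks have size $\ge 2$, of which there are at most $k/2$ blocks.

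The heart of the argument is then a counting step. Squaring and exchanging the order of summation,
\[
\varphi\otimes\varphi\!\left(S_d^{\,k}\right)=\sum_{\pi,\sigma}\Big(\prod_{V\in\pi}\kappa_{|V|}\Big)\Big(\prod_{V\in\sigma}\kappa_{|V|}\Big)\,d^{\,|\pi\vee\sigma|-k/2},
\]
where $|\pi\vee\sigma|$ denotes the number of blocks of the join $\pi\vee\sigma$ and I have used that the number of $\mathbf j$ with $\pi\vee\sigma\le\ker(\mathbf j)$ is exactly $d^{\,|\pi\vee\sigma|}$. Because all blocks of $\pi$ and of $\sigma$ have size at least two, $|\pi\vee\sigma|\le\min(|\pi|,|\sigma|)\le k/2$, so every term with $|\pi\vee\sigma|<k/2$ is negligible as $d\to\infty$. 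Equality forces $k$ to be even, forces $\pi$ and $\sigma$ to be non-crossing pair partitions, and forces $\pi=\sigma$; each surviving term then contributes $\kappa_2^{\,k}=1$. Hence the odd moments vanish and, for $k=2m$, $\varphi\otimes\varphi(S_d^{2m})\to C_m$, the number of non-crossing pairings of $2m$ points, which is precisely the $2m$-th moment of the centred semicircular distribution of variance one.

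Finally I would address the diagonal limit, which I expect to be the main obstacle. The first part of the theorem provides, for each fixed $d$, that $\mathbb{E}[\tau(\Delta_{d,n}^{k})]\to\varphi\otimes\varphi(S_d^{k})$ as $n\to\infty$ (with $\tau$ the normalized trace on $M_{n^2}(\bC)\cong M_n(\bC)\otimes M_n(\bC)$), but to let $d=d(n)$ diverge one needs this convergence to be quantitative and uniform in $d$. I expect the genus expansion underlying the first part to yield an estimate $|\mathbb{E}[\tau(\Delta_{d,n}^{k})]-\varphi\otimes\varphi(S_d^{k})|\le \varepsilon_{k,n}$ with $\varepsilon_{k,n}\to 0$ independently of $d$ (the positive-genus and cross-term contributions being suppressed by powers of $n$ while remaining bounded in $d$ after the $d^{-k/2}$ normalization); combined with the moment convergence of $S_d$ above, this gives $\mathbb{E}[\tau(\Delta_{d,n}^{k})]\to C_{k/2}$ along $d=d(n)\to\infty$, and a matching second-moment bound upgrades this to convergence in probability. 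Establishing the uniformity in $d$ of these error terms, so that no growth restriction on $d(n)$ is required, is the delicate point and is where I would expect to spend the bulk of the effort.
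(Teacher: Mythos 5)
A point of orientation first: this paper never proves Theorem \ref{thm:quantum-channel} at all; it is quoted from \cite{LSY2023} as motivation. What the paper does prove are relatives of its two halves: Theorem \ref{thm:bi-matrix} (a bi-free matrix-model analogue of the fixed-$d$ part, in expectation only, with the centring hypothesis dropped and $\Delta_{d,n}$ modified) and Theorem \ref{thm:centred-result} (the inner limit, i.e.\ that the moments of $S_d=\frac{1}{\sqrt d}\sum_j a_j\otimes b_j$ tend to $|\NC_2(m)|\varphi(a^2)^{m/2}\varphi(b^2)^{m/2}$, proved via bi-free cumulants and meandric systems). Your computation of that inner limit is correct, and it takes a different route from the paper's: you use the product-state factorization $(\varphi\otimes\varphi)(S_d^{\,k})=d^{-k/2}\sum_{\mathbf j}\varphi(a_{j_1}\cdots a_{j_k})^2$, expand each factor in free cumulants, and observe that $|\pi\vee\sigma|\leq k/2$ with equality forcing $\pi=\sigma\in \NC_2(k)$. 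This is elementary free probability (essentially the route of \cite{LSY2023} itself) and is shorter; the paper's proof encodes the same combinatorial core differently, with your double sum over pairs $(\pi,\sigma)$ and the survival condition $\pi=\sigma$ corresponding exactly to the paper's vertically split pair bi-non-crossing partitions and the condition $c(M(\pi))=\frac{m}{2}$ on the associated meandric system. What the bi-free framing buys, and what your route does not, is the machinery that extends to the non-centred case (Theorem \ref{thm:main}); what your route buys is brevity and independence from bi-free technology.

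As a proof of the stated theorem, however, your proposal has genuine gaps. You assume the entire first part (the fixed-$d$ convergence of the random matrix model, in probability and expectation) rather than proving it, so the random-matrix content of the statement is untouched. More seriously, the ``Moreover'' part does not follow from the first part plus the $d\to\infty$ limit of $S_d$ by any soft two-step argument: one needs the $n\to\infty$ convergence to be quantitative and \emph{uniform in $d$}, and the fixed-$d$ hypotheses (asymptotic freeness in probability and expectation for each fixed $d$) provide no such uniformity --- indeed, with $d=d(n)$ growing these hypotheses are not even the right input, and the i.i.d.\ structure must be exploited directly. You correctly identify this as the crux, but resolve it only by an unproven expectation that a genus expansion yields an error $\varepsilon_{k,n}\to 0$ independent of $d$, together with an unproven matching variance bound for the convergence in probability. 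Since that uniform estimate is precisely the hard part of the ``Moreover'' statement, the proposal proves the inner limit correctly but leaves both the first part and the diagonal limit unestablished.
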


To obtain the semicircular distribution, \cite{LSY2023} demonstrated that the distribution of $\frac{1}{\sqrt{d}} \sum^d_{j=1} a_j \otimes a_j$ with respect to $\varphi \otimes \varphi$ tends to the semicircular distribution as $d$ tends to infinity.  Furthermore, the same authors in \cite{LSY2024} developed a Central Limit Theorem for tensor products of freely independent operators in the case where the $a_j$ need not be centred (see Theorem \ref{thm:main}).  The above results led \cite{LSY2024} to pose the question of whether there was a general notion of independence corresponding to such objects where properties and limits could be derived.

In \cite{V2014} Voiculescu introduced the notion of bi-free independence.  Since then the theory has been quite extensively developed (e.g. \cites{BBGS2018, CNS2015-1, CNS2015-2, CS2020, CS2021, S2016, S2017}).  Unlikely other notions of independence, bi-free independence is a notion of independence for pairs of algebras.  In particular, if $(\A_1, \B_1)$ and $(\A_2, \B_2)$ are bi-freely independent, then $\A_1$ and $\A_2$ are freely independent, $\B_1$ and $\B_2$ are freely independent, $\A_1$ and $\B_2$ are classically independent, and $\A_2$ and $\B_1$ are classically independent.  Although the converse need not hold in general (see \cite{S2016}), bi-free independence is a notion of independence where free and classical independence intermingle.

The goal of this paper is to demonstrate that bi-free independence is indeed an appropriate notion of independence where properties of sums of tensor products of freely independent operators can be studied, and properties and limits can be derived.   In particular, we will demonstrate that the results mentioned above can be verified using bi-free techniques in a more direct fashion.  Consequently, this paper establishes a direct connection between bi-free probability and the asymptotics of random quantum channels and tensor products of random matrices.  

This paper also directly advances the techniques of bi-free probability.  In the current literature involving bi-free independence, the left and right operators remain isolated from one another.  For example, the bi-free cumulants (see Section \ref{sec:background}) distinguish which entries are left and which entries are right (mixing is not allowed), all bi-free transformations are two variable transformations with one variable for left operators and one variable for right operators, and the existing notion of bi-free linearization (\cite{BBGS2018}) can only handle the joint distribution of a polynomial in left variables and a polynomial in right variable (that is, it cannot handle a single polynomial in left and right variables).  The only mixing of left and right operators that has occurred was with bi-free conjugate variables in \cite{CS2020} as the bi-free conjugate variables were in the $L_2$-space generated by the left and right operators, but this did not yield any spectral information about polynomials in left and right operators.  This work shows that sums of products of left and right operators can be studied using bi-free techniques thereby yielding hope the theory can be used to examine arbitrary polynomials in left and right operators.  

Excluding this introduction, this paper has four sections structured as follows.  Section \ref{sec:background} reviews the few basic bi-free techniques required for this paper.  It is explained how tensor products of freely independent operators can be examined via bi-free independence, and the basics of bi-free cumulants (which behave in a similar fashion to free cumulants) are summarized.  Section \ref{sec:matrix-models} provides a condensed view of bi-free matrix models and uses bi-free results to obtain a different version of the fixed $d$ portion of Theorem \ref{thm:quantum-channel}.  In particular, Theorem \ref{thm:bi-matrix} drops the centred assumption from Theorem \ref{thm:quantum-channel} at the cost of slightly modifying the $\Delta_{d,n}$ operator.

Sections \ref{sec:centred} and \ref{sec:general} examine the Central Limit Theorem for tensor products of freely independent operators from \cite{LSY2024} through the lens of bi-free independence.  Theorem \ref{thm:centred-result} provides a short proof of this Central Limit Theorem in the centred case using bi-free techniques and a connection with meandric systems.  Theorem \ref{thm:main} independently establishes the Central Limit Theorem from \cite{LSY2024} in the general setting using bi-free techniques.

\section{Background on Bi-Free Independence}
\label{sec:background}

To study bi-free independence, some structures used in free independence are required.  For a natural number $n$, let $[n] = \{1,2, \ldots, n\}$.  A partition $\pi = \{V_1, \ldots, V_m\}$ of $[n]$ is a collection of disjoint sets $V_1, \ldots, V_m$, called blocks of $\pi$, such that
\[
[n] = V_1 \cup V_2 \cup \cdots \cup V_m.
\]
Let $\P(n)$ denote the set of all partitions on $[n]$.  Given $\pi, \sigma \in \P(n)$, it is said that $\sigma$ is a \emph{refinement} of $\pi$, denoted $\sigma \leq \pi$, if every block of $\sigma$ is contained in a single block of $\pi$.

A partition $\pi$ on $[n]$ is said to be \emph{non-crossing} if whenever there are blocks $V, W \in \pi$ with $v_1, v_2 \in V$ and $w_1, w_2 \in W$ such that $v_1 < w_1 < v_2 < w_2$, then $V = W$.   The set of all non-crossing partitions on $[n]$ will be denoted $\NC(n)$. A non-crossing partition $\pi$ is said to be a \emph{pair non-crossing partition} if every block of $\pi$ contains precisely two elements.  The set of all pair non-crossing partitions on $[n]$ is denoted $\NC_2(n)$.  Note $n$ must be even for $\NC_2(n)$ to be non-empty.

For the purposes of this paper, a \emph{non-commutative probability space} is a pair $(\A, \varphi)$ where $\A$ is a unital C$^*$-algebra and $\varphi : \A \to \bC$ is a unital positive linear map.  We do not require $\varphi$ to be tracial as it can be observed the state in the following definition of bi-free independence need not be tracial.

\begin{defn}[\cite{V2014}]\label{defn:bi-free}
Let $(\A, \varphi)$ be a non-commutative probability space.  Pairs of unital C$^*$-subalgebras $\{(\A_{\ell,k}, \A_{r, k})\}_{k \in K}$ of $\A$ are said to be \emph{bi-freely independent with respect to $\varphi$} if there exist Hilbert spaces $\H_k$, unit vectors $\xi_k \in \H_k$, and unital $*$-homomorphisms $\alpha_k : \A_{\ell, k} \to \mathcal{B}(\H_k)$ and $\beta_k : \A_{r,k} \to \mathcal{B}(\H_k)$ such that if $\H = \ast_{k \in K} (\H_k, \xi_k)$ is the reduced free product space, $\lambda_k, \rho_k : \B(\H_k) \to \B(\H)$ are the left and right regular representations respectively, and $\varphi_0 : \B(\H) \to \bC$ is the vacuum vector state, the joint distribution of $\{(\A_{\ell,k}, \A_{r, k})\}_{k \in K}$ with respect to $\varphi$ is equal to the joint distribution of $\{(\lambda_k(\alpha_k(\A_{\ell,k})), \rho_k(\beta_k(\A_{r, k})))\}_{k \in K}$ with respect to $\varphi_0$.
\end{defn}

Although the above definition is very general, this paper will only require a certain class of bi-free independent pairs.  It is elementary from the definitions of free and bi-free independence that unital $*$-algebras $\A_1, \ldots, \A_n \subseteq \A$ are free with respect to $\varphi$ if and only if $\{(\A_k, \bC)\}^n_{k=1}$ are bi-free with respect to $\varphi$ if and only if $\{(\bC, \A_k)\}^n_{k=1}$ are bi-free with respect to $\varphi$.  Thus the notion of free independence embeds into bi-free independence.  Moreover, it is not difficult to see classical independence (also known as tensor independence) embeds into the bi-free setting.

\begin{rem}\label{rem:tensor-independence}
Let $(\A, \varphi)$ be a non-commutative probability space.  To simplify the need for representations, assume $\A \subseteq \B(\H)$ for some Hilbert space $\H$ and $\xi \in \H$ is a unit vector so that $\varphi$ is the vector state corresponding to $\xi$.  Let $\H_k = \H$, $\xi_k = \xi$ for $k=1,2$, and $\K = (\H_1, \xi_1) \ast (\H_2, \xi_2)$ with vacuum vector $\xi_0$.  By the definition of the reduced free product and the left and right regular representations, it is easy to verify that $\lambda_1 : \B(\H_1) \to \B(\K)$ and $\rho_2 : \B(\H_2) \to \B(\K)$ commute with each other, $\lambda_1(\B(\H_1)) \rho_2(\B(\H_2)) \xi_0 = \H_1 \otimes \H_2 \subseteq \K$, and
\[
\langle \lambda_1(T)\rho_2(S) \xi_0, \xi_0\rangle = \varphi(T) \varphi(S)
\]
for all $T, S \in \B(\H)$.  Consequently, $\lambda_1 \otimes \rho_2 : \A \otimes \A \to \B(\K)$ is a representation of $\A \otimes \A$ such that the vector state defined by the vacuum vector $\xi_0$ is $\varphi \otimes \varphi$.  Consequently  $(\A \otimes \bC, \bC\otimes \bC)$ and $(\bC\otimes \bC, \bC \otimes \A)$ are bi-free pairs with respect to $\varphi \otimes \varphi$.
\end{rem}

By combining the above and the associativity of bi-free independence, we obtain the following which is the only collection bi-free operators required in this paper.

\begin{lem}\label{lem:getting-the-bi-free-we-need}
Let $(\A, \varphi)$ be a non-commutative probability space.  Let $a_1, \ldots, a_n \in \A$ be free with respect to $\varphi$ and $b_1, \ldots, b_n \in \A$ be free with respect to $\varphi$.  Then
\[
\{(a_k \otimes 1, 1\otimes 1)\}^n_{k=1} \cup \{(1\otimes 1, 1\otimes b_k)\}^n_{k=1}
\]
are bi-free in $(\A \otimes \A, \varphi \otimes \varphi)$.
\end{lem}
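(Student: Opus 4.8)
The plan is to assemble the claim from two ingredients already in hand---Remark \ref{rem:tensor-independence} and the embedding of free independence into bi-free independence recalled in the preceding discussion---and to glue them together using the associativity of bi-free independence. Concretely, I would realize the desired family as a refinement of the two ``coarse'' bi-free pairs $(\A \otimes 1, \bC)$ and $(\bC, 1 \otimes \A)$, where inside the left coarse pair the elements $a_k \otimes 1$ supply an internally bi-free family, and inside the right coarse pair the elements $1 \otimes b_k$ do the same.

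First I would record the coarse bi-freeness. By Remark \ref{rem:tensor-independence}, the pairs $(\A \otimes \bC, \bC \otimes \bC)$ and $(\bC \otimes \bC, \bC \otimes \A)$ are bi-free with respect to $\varphi \otimes \varphi$; identifying $\bC \otimes \bC$ with the scalar algebra $\bC(1 \otimes 1)$ and writing $\A \otimes 1$ and $1 \otimes \A$ for the two slot-copies of $\A$, this says precisely that the left pair $(\A \otimes 1, \bC)$ and the right pair $(\bC, 1 \otimes \A)$ are bi-free.

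Next I would establish the internal bi-freeness inside each coarse pair. The map $a \mapsto a \otimes 1$ is a unital $*$-homomorphism $\A \to \A \otimes \A$ satisfying $(\varphi \otimes \varphi)(a \otimes 1) = \varphi(a)$, hence a morphism of non-commutative probability spaces; since free independence depends only on the joint distribution, the images $a_1 \otimes 1, \ldots, a_n \otimes 1$ are free in $(\A \otimes \A, \varphi \otimes \varphi)$. By the free-into-bi-free embedding recalled in the text, $\{(a_k \otimes 1, \bC)\}^n_{k=1}$ is therefore bi-free. Applying the symmetric argument to the morphism $a \mapsto 1 \otimes a$ and to the other half of the embedding shows that $\{(\bC, 1 \otimes b_k)\}^n_{k=1}$ is bi-free as well. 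I would note here that the right components of the $a$-pairs and the left components of the $b$-pairs are all the scalar algebra, matching the trivial right, respectively left, component of the corresponding coarse pair.

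Finally I would invoke associativity. The two internally bi-free families sit inside the left slot, respectively the right slot, of the two coarse pairs, which are themselves bi-free by the first step; the associativity of bi-free independence---bi-free groups together with members that are bi-free within each group yield a bi-free whole---then upgrades this to bi-freeness of the full union $\{(a_k \otimes 1, 1 \otimes 1)\}^n_{k=1} \cup \{(1 \otimes 1, 1 \otimes b_k)\}^n_{k=1}$, which is the assertion. The step I expect to demand the most care is this last one: one must verify that the grouping hypotheses of the associativity principle hold exactly, namely that each subordinate pair lies in the correct slot of its coarse pair (trivial components matching trivial components), so that the reconstructive direction of associativity applies verbatim rather than requiring an ad hoc moment computation.
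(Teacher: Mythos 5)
Your proposal is correct and follows exactly the route the paper takes: the paper derives this lemma by combining Remark \ref{rem:tensor-independence} (the coarse bi-freeness of $(\A \otimes \bC, \bC \otimes \bC)$ and $(\bC \otimes \bC, \bC \otimes \A)$), the embedding of free independence into bi-free independence via pairs with scalar components, and the associativity of bi-free independence. Your additional care in checking that the trivial components line up correctly in the associativity step is exactly the right point to be careful about, and nothing further is needed.
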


It is not difficult to see bi-free independence is associative by examining the representations in Definition \ref{defn:bi-free}.  Alternatively, bi-free cumulants can also be used to demonstrate associativity.  Luckily the combinatorial approach to bi-free independence is very similar to that of free probability (see \cite{NS2006}) where a small modification to the set of partitions is required.  In particular, \cite{S2016} uses bi-free cumulants to establish  Remark \ref{rem:tensor-independence}.

To summarize the combinatorics of bi-free independence, a map $\chi: [n] \to \{\ell, r\}$ will be used to designate whether each operator from a set of $n$ operators should be viewed as a left operator ($\ell$) or as a right operator ($r$). Given $\chi$, write 
\[
\chi^{-1}(\{\ell\}) = \{i_1<\cdots<i_p\} \qqand \chi^{-1}(\{r\}) = \{i_{p+1} > \cdots > i_n\},
\]
and define the permutation $s_\chi$ on $[n]$ by $s_\chi(k) = i_k$ for all $k$.  In addition, we define the total ordering $\prec_\chi$ on $[n]$ by $a\prec_\chi b$ if and only if $s_\chi^{-1} (a)< s_\chi^{-1}(b)$.  Notice $\prec_\chi$ corresponds to, instead of reading $[n]$ in the traditional order, reading $\chi^{-1}(\{\ell\})$ in increasing order followed by reading $\chi^{-1}(\{r\})$ in decreasing order.  

\begin{defn}
A partition $\pi \in \P(n)$ is said to be \emph{bi-non-crossing with respect to $\chi$} if the partition  $s_\chi^{-1}\cdot \pi$ (the partition formed by applying $s_\chi^{-1}$ to the blocks of $\pi$) is non-crossing.
Equivalently $\pi$ is bi-non-crossing if whenever there are blocks $V, W \in \pi$ with $v_1, v_2 \in V$ and $w_1, w_2 \in W$ such that $v_1 \prec_\chi w_1 \prec_\chi v_2 \prec_\chi w_2$, then $V = W$.  The set of bi-non-crossing partitions with respect to $\chi$ is denoted by $\BNC(\chi)$.
\end{defn}

Alternatively, one can determine whether a partition $\pi \in \P(n)$ is bi-non-crossing with respect to $\chi$ via the following diagrammatic process:  place nodes along two dashed vertical lines, labelled $1$ to $n$ from top to bottom, such that the nodes on the left dashed line correspond to those values for which $\chi(k) = \ell$ and nodes on the right dashed line correspond to those values for which $\chi(k) = r$.  Then, between the two vertical dashed lines, use solid lines to connect the nodes which are in the same block of $\pi$.  Then $\pi$ is bi-non-crossing with respect to $\chi$ exactly when the solid lines can be drawn to not intersect (i.e. cross).  For example, it is easy to see that $\pi = \{\{1,4\}, \{2,5\}, \{3,6\}\}$ has crossings but $\pi \in \BNC(\chi)$ when $\chi^{-1}(\{\ell\}) = \{1,4,5\}$ and $\chi^{-1}(\{r\}) = \{2,3,6\}$:

\begin{align*}
\begin{tikzpicture}[baseline]
	\draw[thick, dashed] (-0.25,1) --(2.75,1);
	\draw[thick, blue] (0,1) -- (0, 1.5) -- (1.5, 1.5) -- (1.5, 1);
	\draw[thick, red] (0.5,1) -- (0.5, 1.75) -- (2, 1.75) -- (2, 1);
	\draw[thick, green] (1,1) -- (1, 2) -- (2.5, 2) -- (2.5, 1);
	\draw[fill=black] (0, 1) circle (0.075);
	\draw[fill=black] (0.5, 1) circle (0.075);
	\draw[fill=black] (1, 1) circle (0.075);
	\draw[fill=black] (1.5, 1) circle (0.075);
	\draw[fill=black] (2, 1) circle (0.075);
	\draw[fill=black] (2.5, 1) circle (0.075);
	\node[below] at (0, 1) {$1$};
	\node[below] at (0.5, 1) {$2$};
	\node[below] at (1, 1) {$3$};
	\node[below] at (1.5, 1) {$4$};
	\node[below] at (2, 1) {$5$};
	\node[below] at (2.5, 1) {$6$};
	\draw[thick, ->] (3.25, 1.5) -- (3.75, 1.5);
	\end{tikzpicture}
\qquad
\begin{tikzpicture}[baseline]
	\draw[thick, dashed] (-1,3) -- (-1,0) -- (1, 0) -- (1,3);
	\draw[thick, blue] (-1, 2.75) -- (-0.5, 2.75) -- (-0.5, 1.25) -- (-1, 1.25);
	\draw[thick, red] (1, 2.25)  -- (0, 2.25)  -- (0, .75) -- (-1, 0.75);
	\draw[thick, green] (1, 0.25) -- (0.5, 0.25) -- (.5, 1.75) -- (1, 1.75);
	\draw[fill=black] (-1, 2.75) circle (0.075);
	\node[left] at (-1, 2.75) {$1$};
	\draw[fill=black] (1, 2.25) circle (0.075);
	\node[right] at (1, 2.25) {$2$};
	\draw[fill=black] (1, 1.75) circle (0.075);
	\node[right] at (1, 1.75) {$3$};
	\draw[fill=black] (-1, 1.25) circle (0.075);
	\node[left] at (-1, 1.25) {$4$};
	\draw[fill=black] (-1, .75) circle (0.075);
	\node[left] at (-1, .75) {$5$};
	\draw[fill=black] (1, 0.25) circle (0.075);
	\node[right] at (1, 0.25) {$6$};
\end{tikzpicture}
\end{align*}

To obtain the bi-free cumulants, we require a M\"{o}bius inversion of the moments.  The \emph{bi-non-crossing M\"{o}bius function} is the function
\[
\mu_{\BNC} : \bigcup_{n\geq1}\bigcup_{\chi : [n] \to\{\ell, r\}} \BNC(\chi)\times \BNC(\chi) \to \bZ
\]
defined such that $\mu_{\BNC}(\pi, \sigma) = 0$ unless $\pi$ is a refinement of $\sigma$, and otherwise defined recursively via the formulae
\[
\sum_{\substack{\tau \in \BNC(\chi) \\\pi \leq \tau \leq \sigma}} \mu_{\BNC}(\tau, \sigma) = \sum_{\substack{\tau \in \BNC(\chi) \\ \pi \leq \tau \leq \sigma}} \mu_{\BNC}(\pi, \tau) = \begin{cases}
1 & \mbox{if } \pi = \sigma  \\
0 & \mbox{otherwise}
\end{cases}.
\]
Due to the similarity of the lattice structures, the bi-non-crossing M\"{o}bius function is related to the non-crossing M\"{o}bius function $\mu_{\NC}$ via the formula 
\[
\mu_{\BNC}(\pi, \sigma) = \mu_{\NC}(s^{-1}_\chi \cdot \pi, s^{-1}_\chi \cdot \sigma).
\]

To define the bi-free cumulants, fix a non-commutative probability space $(\A, \varphi)$ and a map $\chi : [n] \to \{\ell, r\}$.  Given $\pi \in \BNC(\chi)$ and $Z_1,\ldots, Z_n \in \A$, let
\[
\varphi_\pi(Z_1, \ldots, Z_n) = \prod_{V \in \pi} \varphi\left( \prod_{q \in V} Z_q\right)
\]
where the product (like all products in this paper) is performed in increasing order. 

\begin{defn}
The \emph{bi-free cumulant of $Z_1,\ldots, Z_n$ with respect to $\chi$} is 
\begin{align}\label{eq:cumulant}
\kappa_\chi(Z_1, \ldots, Z_n) = \sum_{\pi \in \BNC(\chi)} \varphi_\pi(Z_1, \ldots, Z_n) \mu_{\BNC}(\pi, 1_{n}),
\end{align}
where $1_n = \{[n]\}$ denotes the full partition.  
\end{defn}

If for $\pi \in \BNC(\chi)$ we define
\[
\kappa_\pi(Z_1, \ldots, Z_n) = \prod_{V \in \pi} \kappa_{\chi|_V}\left((Z_1, \ldots, Z_{n})|_{V}\right)
\]
where $(Z_1, \ldots, Z_{n})|_{V}$ denotes restricting the $n$-tuple to elements of $V$, we obtain by M\"{o}bius inversion that
\begin{align}\label{eq:moment}
\varphi(Z_1 \cdots Z_n) = \varphi_{1_n}(Z_1, \ldots, Z_n) = \sum_{\pi \in \BNC(\chi)} \kappa_\pi(Z_1, \ldots, Z_n). 
\end{align}
Equations (\ref{eq:cumulant}) and (\ref{eq:moment}) are known as the bi-free moment-cumulant formulae.  

In the same way free cumulants characterize free independence, bi-free cumulants characterize bi-free independence.
\begin{thm}[\cites{CNS2015-1, CNS2015-2}]\label{thm:mixed-cumulants-vanish}
Let $\{(\A_{\ell, k}, \A_{r, k})\}_{k \in K}$ be pairs of unital C$^*$-subalgebras of a non-commutative probability space $(\A, \varphi)$.  Then $\{(\A_{\ell, k}, \A_{r, k})\}_{k \in K}$ are bi-freely independent with respect to $\varphi$ if and only if mixed bi-free cumulants vanish; that is, for all $\chi : [n]\to\{\ell,r\}$, $\theta : [n] \to K$ non-constant, and $Z_m \in A_{\chi(m), \theta(m)}$,
\[
\kappa_\chi(Z_1, \ldots, Z_n) = 0.
\]
\end{thm}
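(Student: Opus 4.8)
The plan is to isolate the single nontrivial fact that the \emph{universal model} of Definition~\ref{defn:bi-free} has vanishing mixed cumulants, and to deduce both implications of Theorem~\ref{thm:mixed-cumulants-vanish} formally from the moment-cumulant formulae. For $\theta : [n] \to K$ let $\pi_\theta \in \P(n)$ be the partition whose blocks are the fibres $\theta^{-1}(\{k\})$, so that a block is $\theta$-monochromatic exactly when it sits inside a block of $\pi_\theta$. Fixing $\chi$, $\theta$, and $Z_m \in \A_{\chi(m),\theta(m)}$, a summand $\kappa_\pi(Z_1,\ldots,Z_n) = \prod_{V\in\pi}\kappa_{\chi|_V}((Z_1,\ldots,Z_n)|_V)$ in \eqref{eq:moment} is a product of cumulants internal to single pairs $(\A_{\ell,k},\A_{r,k})$ precisely when $\pi \leq \pi_\theta$. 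Hence the vanishing of all mixed cumulants is \emph{equivalent} to the family of moment identities
\begin{align}\label{eq:target}
\varphi(Z_1 \cdots Z_n) = \sum_{\substack{\pi \in \BNC(\chi)\\ \pi \leq \pi_\theta}} \kappa_\pi(Z_1, \ldots, Z_n),
\end{align}
valid for all $n$, $\chi$, $\theta$: granting \eqref{eq:target}, a block-by-block induction on $n$ using \eqref{eq:moment} (whose $1_n$ term is the top cumulant $\kappa_\chi$) forces every mixed $\kappa_\chi$ to vanish, and conversely the vanishing of mixed cumulants collapses \eqref{eq:moment} to \eqref{eq:target}.

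The heart of the argument is to verify \eqref{eq:target} for the universal model, where $\H = \ast_k(\H_k,\xi_k)$, the operators are $\lambda_k(\alpha_k(\cdot))$ and $\rho_k(\beta_k(\cdot))$, and $\varphi = \varphi_0$ is the vacuum state. Decomposing $\H_k = \bC\xi_k \oplus \H_k^\circ$ and the free product accordingly, each $\lambda_k$ acts on a reduced tensor through the usual creation/preservation/annihilation trichotomy on the \emph{leading} $\H_k$-letter, and each $\rho_k$ acts the same way on the \emph{trailing} $\H_k$-letter. I would expand $\varphi_0(Z_1 \cdots Z_n)$ by tracking which operators must cancel against which so as to return a multiple of $\xi_0$; reading the indices in the order $\prec_\chi$ (left indices increasing, then right indices decreasing) shows that the surviving cancellation patterns are exactly those that are non-crossing after applying $s_\chi^{-1}$, i.e. the elements of $\BNC(\chi)$. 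The decisive point is that a letter created in the free copy $\H_k$ can be annihilated only by an operator acting on the same copy $\H_k$, so each surviving block is $\theta$-monochromatic and thus $\pi \leq \pi_\theta$; matching the combinatorial weight of each pattern with the multiplicative cumulant $\kappa_\pi$ yields \eqref{eq:target} on the model.

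With \eqref{eq:target} established for the universal model, both directions of Theorem~\ref{thm:mixed-cumulants-vanish} follow. If the pairs are bi-free, their joint distribution equals that of the model by Definition~\ref{defn:bi-free}, and since \eqref{eq:cumulant} expresses each cumulant as a fixed linear combination of moments, the mixed cumulants of the family vanish because the model's do. Conversely, if the mixed cumulants vanish, then \eqref{eq:target} holds for the family, so every moment is determined by the internal cumulants of the individual pairs and hence by the marginal distributions; the universal model has the same marginals and also satisfies \eqref{eq:target}, so the two families have identical moments and the family is bi-free. The main obstacle is the model computation of the second paragraph: formalizing the interleaved action of the left and right regular representations on the reduced free product and confirming that the nonvanishing pairings are precisely the $\theta$-monochromatic bi-non-crossing partitions. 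This is where the genuine bi-free structure, namely the interaction of $\lambda_k$ with $\rho_k$ mediated by the order $\prec_\chi$, enters; everything else is Möbius inversion.
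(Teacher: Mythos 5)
First, a point of comparison: this paper does not prove Theorem \ref{thm:mixed-cumulants-vanish} at all; it is quoted from \cite{CNS2015-1} and \cite{CNS2015-2}. So your proposal has to be judged against those proofs, and on its own merits. Your outer scaffolding is fine: writing $\pi_\theta$ for the partition into fibres of $\theta$, the equivalence between ``all mixed bi-free cumulants vanish'' and the collapsed moment formula $\varphi(Z_1\cdots Z_n)=\sum_{\pi\in\BNC(\chi),\,\pi\leq\pi_\theta}\kappa_\pi(Z_1,\ldots,Z_n)$ does follow from \eqref{eq:moment} by exactly the induction you describe, and granting that this collapsed formula holds in the universal model of Definition \ref{defn:bi-free}, both implications follow as you say (the converse using a GNS realization of each pair so that the model reproduces the pair distributions).

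The genuine gap is the step you yourself flag as the heart, and it is not merely unfinished --- the sketch of it is false as stated. Expanding $\varphi_0(Z_1\cdots Z_n)$ via the creation/preservation/annihilation trichotomy does \emph{not} produce a sum over all $\theta$-monochromatic elements of $\BNC(\chi)$ weighted by $\kappa_\pi$. What survives is a strictly smaller family of ``chain'' structures: a block can only arise as a maximal set of operators from one pair acting successively on a single letter, so consecutive elements of a block must be separated only by blocks of \emph{other} pairs nested between them; and the weight of a chain $V=\{i_1\prec_\chi\cdots\prec_\chi i_s\}$ in the pair $k$ is the Boolean-type quantity $\langle T_{i_s}P_k^\circ T_{i_{s-1}}P_k^\circ\cdots P_k^\circ T_{i_1}\xi_k,\xi_k\rangle$, where $P_k^\circ$ is the projection of $\H_k$ onto $\H_k^\circ$, not the bi-free cumulant $\kappa_{\chi|_V}$. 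Already for a single pair and three left operators the mismatch is visible: the non-crossing structure $\{\{1,3\},\{2\}\}$ can never survive the expansion (when the middle operator acts, the letter created by the third is exposed, so the middle operator is forced into that chain), and the expansion yields only the four interval partitions with Boolean weights; the identity $\varphi(Z_1Z_2Z_3)=\sum_{\pi\in\NC(3)}\kappa_\pi$ holds by M\"{o}bius inversion, i.e.\ by the definition \eqref{eq:cumulant} of $\kappa$, not by counting cancellation patterns, and the weights genuinely differ from blocks of size $3$ onward. Thus the formula you need on the model is a nontrivial resummation of the Fock-space expansion, further complicated in the bi-free setting by left and right operators attacking opposite ends of the reduced words. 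That resummation is precisely the content of \cite{CNS2015-1} and \cite{CNS2015-2}, and it is not carried out there by pattern-matching on the free product space, but via Voiculescu's universal polynomials, evaluation on special operator families, and the lattice structure of $\BNC(\chi)$. As written, your proposal reduces the theorem to its hardest step and then asserts that step.
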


For the arguments of this paper, we will be able to greatly simplify and reduce the combinatorics of bi-non-crossing partitions.   In particular, \cite{S2016} used the following sub-collection of bi-non-crossing partitions to study classical independence.

\begin{defn}
Given a map $\chi : [n] \to \{\ell, r\}$, a bi-non-crossing partition $\pi \in \BNC(\chi)$ is said to be \emph{vertically split} if whenever $V$ is a block of $\pi$, either $V \subseteq \chi^{-1}(\{\ell\})$ or $V \subseteq \chi^{-1}(\{r\})$. The set of vertically split bi-non-crossing partitions is denoted by $\BNC_{\vs}(\chi)$.
\end{defn}

Note $\pi \in \BNC(\chi)$ is vertically split exactly when a vertical line can be drawn to split the left and right sides of the bi-non-crossing diagram of $\pi$.  Consequently, $\BNC_{\vs}(\chi)$ is in bijective correspondence with pairs of non-crossing partitions; one on the left and one on the right.

\begin{align*}
\begin{tikzpicture}[baseline]
	\draw[thick, dashed] (-1,3) -- (-1,0) -- (1, 0) -- (1,3);
	\draw[thick, black] (-1, 2.75) -- (-0.5, 2.75) -- (-0.5, 1.25) -- (-1, 1.25);
	\draw[thick, red] (1, 2.25)  -- (0, 2.25)  -- (0, .75) -- (-1, 0.75);
	\draw[thick, black] (1, 0.25) -- (0.5, 0.25) -- (.5, 1.75) -- (1, 1.75);
	\draw[fill=black] (-1, 2.75) circle (0.075);
	\node[left] at (-1, 2.75) {$1$};
	\draw[fill=black] (1, 2.25) circle (0.075);
	\node[right] at (1, 2.25) {$2$};
	\draw[fill=black] (1, 1.75) circle (0.075);
	\node[right] at (1, 1.75) {$3$};
	\draw[fill=black] (-1, 1.25) circle (0.075);
	\node[left] at (-1, 1.25) {$4$};
	\draw[fill=black] (-1, .75) circle (0.075);
	\node[left] at (-1, .75) {$5$};
	\draw[fill=black] (1, 0.25) circle (0.075);
	\node[right] at (1, 0.25) {$6$};
	\node[below] at (0,0) {not vertically split};
\end{tikzpicture}
\qquad
\begin{tikzpicture}[baseline]
	\draw[thick, dashed] (-1,3) -- (-1,0) -- (1, 0) -- (1,3);
	\draw[dotted, thick] (0,0) -- (0, 3);
	\draw[thick, black] (-1, 2.75) -- (-0.5, 2.75) -- (-0.5, 1.25) -- (-1, 1.25);
	\draw[thick, black] (-0.5, 1.25)  -- (-0.5, .75) -- (-1, 0.75);
	\draw[thick, black] (1, 0.25) -- (0.5, 0.25) -- (.5, 2.25) -- (1, 2.25);
	\draw[fill=black] (-1, 2.75) circle (0.075);
	\node[left] at (-1, 2.75) {$1$};
	\draw[fill=black] (1, 2.25) circle (0.075);
	\node[right] at (1, 2.25) {$2$};
	\draw[fill=black] (1, 1.75) circle (0.075);
	\node[right] at (1, 1.75) {$3$};
	\draw[fill=black] (-1, 1.25) circle (0.075);
	\node[left] at (-1, 1.25) {$4$};
	\draw[fill=black] (-1, .75) circle (0.075);
	\node[left] at (-1, .75) {$5$};
	\draw[fill=black] (1, 0.25) circle (0.075);
	\node[right] at (1, 0.25) {$6$};
	\node[below] at (0,0) {vertically split};
\end{tikzpicture}
\end{align*}

\begin{rem}\label{rem:vertically-split}
Since we will always be in the context of Lemma \ref{lem:getting-the-bi-free-we-need} where all left operators are bi-free from all right operators, Theorem \ref{thm:mixed-cumulants-vanish} implies that $\kappa_\pi(Z_1, \ldots, Z_n) = 0$ whenever $\pi \in \BNC(\chi)$ is not vertically split.  
\end{rem}

Moreover, we will only need to consider $\chi$ of the form $\chi_{n,a} : [2n] \to \{\ell, r\}$ where
\[
\chi_{n,a}(k) = \begin{cases}
\ell & \text{if $k$ is odd} \\
r & \text{if $k$ is even}
\end{cases}.
\]
That is, $\chi_{n,a}$ alternates between left and right operators.  We will use $\BNC^a_{\vs}(n)$ in place of $\BNC_\vs(\chi_{n,a})$, and $\BNC^a_{\vs, 2}(n)$ for all elements $\pi \in \BNC^a_{\vs}(n)$ where every block of $\pi$ contains precisely two elements (i.e. the alternating, vertically split, pair bi-non-crossing partitions).

\section{Asymptotics of Tensors of Random Matrices via Bi-Free Matrix Models}
\label{sec:matrix-models}

In this section, we will use the bi-free matrix models from \cite{S2017} to obtain an analogous result to the fixed $d$ portion of \cite{LSY2023}*{Theorem 2.2}.  Instead of using the operator approach to bi-free matrix models from \cite{S2017}, we will use the equivalent tensor characterization from \cite{CS2021}*{Section 9}.  

Let $\mathcal{L}_{\overline{\infty}}(\mu)$ denote the collection of all random variables that have moments of all order with respect to a probability measure $\mu$.  Consider the algebra
\[
\A = \mathcal{L}_{\overline{\infty}}(\mu) \otimes M_n(\bC) \otimes M_n(\bC)^{\op}
\]
where $M_n(\bC)^{\op}$ denotes $M_n(\bC)$ with the opposite multiplication.  Further define a linear ``expectation" $E : \A \to \bC$ by 
\[
E(f \otimes T \otimes S) = \mathbb{E}(f) \tr(TS)
\]
where $\tr$ denotes the normalized trace on $M_n(\bC)$.  Note $E$ is not positive.

Let $L, R : \mathcal{L}_{\overline{\infty}}(\mu) \otimes M_n(\bC) \to \A$ be the linear maps such that
\[
L(f \otimes T) = f \otimes T \otimes I_n \qqand R(f \otimes T) = f \otimes I_n \otimes T.
\]
For self-adjoint $\{X_j\}^d_{j=1} \subseteq \mathcal{L}_{\overline{\infty}}(\mu) \otimes M_n(\bC)$ (i.e. our usual notion of random matrices), it is elementary to verify that the joint distribution of $\{L(X_j)\}^d_{j=1}$ with respect to $E$ is the joint distribution of $\{X_j\}^d_{j=1}$ with respect to $\tr \circ \mathbb{E}$.  Moreover, the joint distribution of $\{R(X_j)\}^d_{j=1}$ with respect to $E$ is the joint distribution of $\{\overline{X_j}\}^d_{j=1}$ with respect to $\tr \circ \mathbb{E}$ as
\begin{align*}
E(R(X_{j_1}) \cdots R(X_{j_k})) &= \mathbb{E}\left(\tr(X_{j_k} \cdots X_{j_1})\right) \\
&= \mathbb{E}\left(\tr\left((X_{j_k} \cdots X_{j_1})^t\right)\right) \\
&= \mathbb{E}\left(\tr\left(X_{j_1}^t \cdots X_{j_k}^t\right)\right) \\
&= \mathbb{E}\left(\tr\left(\overline{X_{j_1}} \cdots \overline{X_{j_k}}\right)\right).
\end{align*}
Note for all $X, Y \in \mathcal{L}_{\overline{\infty}}(\mu) \otimes M_n(\bC)$ that $L(X)R(Y) = R(Y) L(X)$.

The above plus \cite{S2017}*{Theorem 4.13} is all we need for the bi-free proof of our analogue of \cite{LSY2023}*{Theorem 2.2}.

\begin{thm}\label{thm:bi-matrix}
Let $W_1, \ldots, W_{2d}$ be self-adjoint random $n \times n$ matrices such that
\begin{itemize}
\item the distribution of $W_j$ converges to $\mu_j$ as $n$ tends to infinity for all $j$, and 
\item $\{W_j\}^{2d}_{j=1}$ are asymptotically free in expectation as $n$ tends to infinity.
\end{itemize}
Let $a_1, \ldots, a_{2d}$ be freely independent random variables with respect to $\varphi$ with distributions $\mu_1, \ldots, \mu_{2d}$ respectively and let
\[
\Delta_{d,n} = \frac{1}{\sqrt{d}} \sum^d_{j=1}\left(W_j \otimes \overline{W_{j+d}} -\mathbb{E}\left(\tr(W_j)I_n \otimes \tr(\overline{W_{j+d}})I_n\right)\right).
\]
Then the distribution of $\Delta_{d,n}$ tends to the distribution of 
\[
\frac{1}{\sqrt{d}} \sum^d_{j=1} a_j \otimes a_{j+d} - \varphi(a_j)1 \otimes \varphi(a_{j+d})1
\]
with respect to $\varphi \otimes \varphi$ as $n$ tends to infinity.  
\end{thm}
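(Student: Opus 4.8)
The plan is to translate the matrix-model statement into a statement about bi-free cumulants via the tensor characterization of the bi-free matrix model, and then match cumulants on the two sides as $n \to \infty$. The key observation is that the operator $W_j \otimes \overline{W_{j+d}}$ should be realized inside the algebra $\A = \mathcal{L}_{\overline{\infty}}(\mu) \otimes M_n(\bC) \otimes M_n(\bC)^{\op}$ as a product $L(W_j) R(W_{j+d})$, since $L$ realizes left multiplication by $W_j$ on the first tensor leg and $R$ realizes (opposite) multiplication by $\overline{W_{j+d}}$ on the second leg. The computation already displayed in the excerpt shows $E(R(X_{j_1}) \cdots R(X_{j_k})) = \mathbb{E}(\tr(\overline{X_{j_1}} \cdots \overline{X_{j_k}}))$, so $R$ turns $W_{j+d}$ into $\overline{W_{j+d}}$ at the level of $E$, and $L(X)$ and $R(Y)$ commute. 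Thus the moments of $\Delta_{d,n}$ with respect to the natural trace coincide with the moments of the corresponding sum of $L$-$R$ products with respect to $E$.

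\smallskip

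First I would invoke \cite{S2017}*{Theorem 4.13} to conclude that, because $\{W_j\}_{j=1}^{2d}$ are asymptotically free in expectation, the pairs $\{(L(W_j), R(W_j))\}_{j=1}^{2d}$ become asymptotically bi-free with respect to $E$ as $n \to \infty$, with the left variables $L(W_j)$ having limiting distribution $\mu_j$ and the right variables $R(W_j)$ having limiting distribution $\mu_j$ as well (matching the limiting distributions of $a_j$). Combined with Lemma \ref{lem:getting-the-bi-free-we-need} and Remark \ref{rem:tensor-independence}, this identifies the limiting joint $*$-distribution of the family $\{L(W_j)\}_j \cup \{R(W_j)\}_j$ with that of $\{a_j \otimes 1\}_j \cup \{1 \otimes a_j\}_j$ inside $(\A \otimes \A, \varphi \otimes \varphi)$. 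Because each summand of $\Delta_{d,n}$ is (after centring) a product of one left and one right operator, the relevant expectations are computed by the bi-free moment-cumulant formula \eqref{eq:moment}, and by Remark \ref{rem:vertically-split} only vertically split partitions survive.

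\smallskip

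Next I would carry out the centring bookkeeping. The subtracted term $\mathbb{E}(\tr(W_j) I_n \otimes \tr(\overline{W_{j+d}}) I_n)$ is exactly $E(L(W_j)) E(R(W_{j+d}))$ in the limit, and corresponds on the target side to $\varphi(a_j) 1 \otimes \varphi(a_{j+d}) 1$. So each centred summand is $L(W_j) R(W_{j+d}) - E(L(W_j)) E(R(W_{j+d}))$, whose limit is $a_j \otimes a_{j+d} - \varphi(a_j) 1 \otimes \varphi(a_{j+d}) 1$. Since convergence of all joint moments of a bounded (in the relevant normalized sense) family implies convergence of the moments of any fixed polynomial in those variables — and $\Delta_{d,n}$ is a fixed linear combination of degree-two monomials — the moments of $\Delta_{d,n}$ converge to the corresponding moments of the target operator, which is the desired conclusion on distributions.

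\smallskip

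\textbf{The main obstacle} I expect is the passage from asymptotic freeness in expectation to asymptotic bi-freeness of the pairs $(L(W_j), R(W_j))$ with the correct transposition behaviour on the right leg, i.e. verifying that the hypotheses of \cite{S2017}*{Theorem 4.13} are genuinely met and that the non-positivity of $E$ does not cause trouble. The subtlety is that $E$ is not a state, so one must argue purely at the level of moment (cumulant) convergence rather than via $C^*$-algebraic boundedness; the moments are still well defined because every $W_j$ has moments of all orders, and the limiting distribution is that of the \emph{honest} operator $\frac{1}{\sqrt{d}}\sum_j a_j \otimes a_{j+d} - \varphi(a_j)1 \otimes \varphi(a_{j+d})1$ in the genuine non-commutative probability space $(\A \otimes \A, \varphi \otimes \varphi)$. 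Once the cumulant identification is in place, the remaining steps are routine Möbius-inversion bookkeeping using \eqref{eq:cumulant} and \eqref{eq:moment}.
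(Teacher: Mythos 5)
Your overall architecture mirrors the paper's: realize the summands as $L(W_j)R(W_{j+d})$ in the model algebra $\mathcal{L}_{\overline{\infty}}(\mu)\otimes M_n(\bC)\otimes M_n(\bC)^{\op}$, invoke \cite{S2017}*{Theorem 4.13} to get asymptotic bi-freeness of the pairs $(L(W_j),R(W_j))$, and identify the limiting distribution via Lemma \ref{lem:getting-the-bi-free-we-need}. However, there is a genuine gap at the step you call the ``key observation,'' and it is exactly the step that carries the real content of the paper's proof. You assert that the moments of $\Delta_{d,n}$ with respect to $\mathbb{E}\circ(\tr\otimes\tr)$ \emph{coincide} with the moments of the corresponding sum of $L$-$R$ products with respect to $E$. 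This is false at finite $n$: the functional $E$ couples the two matrix legs through a \emph{single} trace, $E(f\otimes T\otimes S)=\mathbb{E}(f)\tr(TS)$, whereas the trace on $M_n(\bC)\otimes M_n(\bC)$ produces a \emph{product} of two traces. Concretely, for deterministic self-adjoint $X,Y$ one has $E(L(X)R(Y))=\tr(XY)$ while $(\tr\otimes\tr)(X\otimes\overline{Y})=\tr(X)\tr(\overline{Y})$; taking $X=Y=\mathrm{diag}(1,-1)$ gives $1\neq 0$. More generally, expanding $\mathbb{E}[(\tr\otimes\tr)(\Delta_{d,n}^m)]$ yields terms of the form $\mathbb{E}\left[\tr(W_{i_1}\cdots W_{i_k})\,\tr(\overline{W_{j_1}}\cdots\overline{W_{j_l}})\right]$, whereas expanding $E(\Gamma_{d,n}^m)$ (where $\Gamma_{d,n}$ is the $L$-$R$ analogue of $\Delta_{d,n}$) yields single traces $\mathbb{E}\left[\tr(W_{i_1}\cdots W_{i_k}W_{j_l}\cdots W_{j_1})\right]$, with an order reversal on the right factors coming from the opposite multiplication. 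Since all of your subsequent steps (moment-cumulant expansion, ``convergence of moments of a fixed polynomial'') take place inside $(\A,E)$, what they prove is convergence of the distribution of $\Gamma_{d,n}$, not of $\Delta_{d,n}$; the bridge between the two non-commutative probability spaces is missing.

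The paper supplies precisely this bridge, and only at the level of limits: using that $\{(L(W_j),R(1))\}_{j=1}^{d}\cup\{(L(1),R(W_j))\}_{j=d+1}^{2d}$ are asymptotically bi-free, every mixed $E$-moment factors asymptotically,
\begin{align*}
\lim_{n\to\infty} E(L(W_{i_1})\cdots L(W_{i_k})R(W_{j_1})\cdots R(W_{j_l}))
= \lim_{n\to\infty}\tr(\mathbb{E}(W_{i_1}\cdots W_{i_k}))\,\tr(\mathbb{E}(\overline{W_{j_1}}\cdots\overline{W_{j_l}})),
\end{align*}
and it is by matching this factored limit, term by term, against the product-of-traces expansion of $\mathbb{E}[\tr(\Delta_{d,n}^m)]$ that the paper obtains $\lim_{n\to\infty}\mathbb{E}[\tr(\Delta_{d,n}^m)]=\lim_{n\to\infty}E(\Gamma_{d,n}^m)$; only after that identity can one finish as you do. Relatedly, your ``main obstacle'' paragraph worries about the wrong thing: the non-positivity of $E$ and the applicability of \cite{S2017}*{Theorem 4.13} are unproblematic since everything is at the level of moments, while the single-trace versus product-of-traces discrepancy --- the actual crux --- goes unaddressed in your proposal.
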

\begin{proof}
Using the above notation, since $\{L(W_j)\}^{2d}_{j=1}$ are asymptotically freely independent \cite{S2017}*{Theorem 4.13} implies that $\{(L(W_j), R(W_j))\}^{2d}_{j=1}$ are asymptotically bi-freely independent with respect to $E$.  Consequently
\[
\{(L(W_j), R(1)\}^{d}_{j=1} \cup \{(L(1), R(W_j))\}^{2d}_{j=d+1}
\]
are asymptotically bi-freely independent with respect to $E$. Thus for $\{i_1, \ldots, i_k\} \in \{1,\ldots, d\}$ and $\{j_1, \ldots, j_m\} \in \{d+1, \ldots, 2d\}$, we have that
\begin{align*}
\lim_{n \to\infty} & E(L(W_{i_1}) \cdots L(W_{i_k}) R(W_{j_1}) \cdots R(W_{j_m})) \\
& = \lim_{n \to\infty} E(L(W_{i_1}) \cdots L(W_{i_k})) E(R(W_{j_1}) \cdots R(W_{j_m})) \\
&=  \lim_{n \to\infty} \tr(\mathbb{E}(W_{i_1} \cdots W_{i_k}))\tr(\mathbb{E}(\overline{W_{j_1}} \cdots \overline{W_{j_m}})).
\end{align*}
Consequently, since $\lambda_j = \mathbb{E}(\tr(W_j))$ and $\rho_j = \mathbb{E}(\tr(\overline{W_j}))$ are scalars (dependent on $n$), if
\[
\Gamma_{d,n} = \frac{1}{\sqrt{d}} \sum^d_{j=1}\left(L(W_j) R(W_{j+d}) - L(\lambda_j I_n) R(\rho_{j+d} I_n))\right).
\]
we obtain for all $m$ that
\[
\lim_{n \to \infty} \mathbb{E}(\tr(\Delta_{d,n}^m)) = \lim_{n \to \infty} E(\Gamma_{d,n}^m).
\]

Since the distribution of $W_j$ converges to the distribution of $a_j$ for all $j$, and since $\{(L(W_j), R(1)\}^{d}_{j=1} \cup \{(L(1), R(W_j))\}^{2d}_{j=d+1}$ are asymptotically bi-freely independent, we obtain by Lemma \ref{lem:getting-the-bi-free-we-need} that the distribution of $\Gamma_{d,n}$ tends to the distribution of
\[
\frac{1}{\sqrt{d}} \sum^d_{j=1} a_j \otimes a_{j+d} - \varphi(a_j) \otimes \varphi(a_{j+d})
\]
with respect to $\varphi \otimes \varphi$ as $n$ tends to infinity.
\end{proof}

Although Theorem \ref{thm:bi-matrix} simplifies the operator $\Delta_{d,n}$ from \cite{LSY2023}*{Theorem 2.2} so that the term subtracted off is just a scalar multiple of the identity and only obtains the asymptotic behaviour of $\Delta_{d,n}$ in expectation opposed to in probability and expectation, Theorem \ref{thm:bi-matrix} does drop the centred condition and allows for the two sides of each tensor to have different random matrices.  Provided $W_j$ and $W_{j+d}$ have the same asymptotic distribution, one can replace $W_j \otimes \overline{W_{j+d}}$ with $W_j \otimes \overline{W_{j}}$.  Moreover, the asymptotic distributions of $\Delta_{d,n}$ as $d$ and $n$ tend to infinity are characterized by the Central Limit Theorem from \cite{LSY2024}*{Theorem 1.1} that will be demonstrated using bi-free techniques in Section \ref{sec:general}.

\section{CLT of Tensors of Centred Freely Independent Operators}
\label{sec:centred}

In this section, we will provide a short proof of \cite{LSY2024}*{Theorem 1.1} in the case of centred operators using bi-free techniques.

\begin{thm}\label{thm:centred-result}
Let $(\A, \varphi)$ be a non-commutative probability space and let $a, b \in \A$ be such that $\varphi(a) = \varphi(b) = 0$.  Let $(a_k)_{k\geq 1}$ be a sequence of freely independent copies of $a$ in $(\A, \varphi)$  and let $(b_k)_{k\geq 1}$ be a sequence of freely independent copies of $b$ in $(\A, \varphi)$.  Let
\[
S_n = \frac{1}{\sqrt{n}} \sum^n_{k=1} a_k \otimes b_k \in \A \otimes \A.
\]
Then for all $m \in \bN$,
\[
\lim_{n \to \infty} (\varphi \otimes \varphi)(S_n^m) = \begin{cases}
0 & \text{if $m$ is odd} \\
|NC_2(m)| \varphi(a^2)^\frac{m}{2}\varphi(b^2)^\frac{m}{2} & \text{if $m$ is even}
\end{cases}.
\]
Thus, if $a$ and $b$ are self-adjoint elements with variance 1, the above limit is the semicircular distribution with variance 1.
\end{thm}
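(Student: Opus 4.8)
The plan is to realize $S_n^m$ inside the bi-free framework of Lemma \ref{lem:getting-the-bi-free-we-need} and expand via the bi-free moment-cumulant formula. Writing $L_k = a_k\otimes 1$ and $R_k = 1\otimes b_k$, Lemma \ref{lem:getting-the-bi-free-we-need} gives that $\{(L_k,1\otimes 1)\}_{k} \cup \{(1\otimes 1, R_k)\}_k$ are bi-free in $(\A\otimes\A,\varphi\otimes\varphi)$, and each summand of $S_n$ factors as $a_k\otimes b_k = L_k R_k$. Expanding,
\[
(\varphi\otimes\varphi)(S_n^m) = \frac{1}{n^{m/2}}\sum_{k_1,\dots,k_m=1}^n (\varphi\otimes\varphi)(L_{k_1}R_{k_1}\cdots L_{k_m}R_{k_m}),
\]
so that each term is a moment of $2m$ operators alternating left and right, governed by the map $\chi_{m,a}$ on $[2m]$. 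Applying the moment-cumulant formula (\ref{eq:moment}) turns each moment into a sum over $\pi\in\BNC(\chi_{m,a})$ of $\kappa_\pi$.

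First I would prune the partitions. By Remark \ref{rem:vertically-split} only vertically split $\pi\in\BNC^a_{\vs}(2m)$ survive, so $\pi$ splits into a left partition $\pi_\ell$ and a right partition $\pi_r$, each identified with a non-crossing partition of the factor indices $\{1,\dots,m\}$. On each side the bi-free cumulants reduce to ordinary free cumulants in a single tensor leg, and since the $a_k$ (resp. $b_k$) are free copies, mixed free cumulants vanish (Theorem \ref{thm:mixed-cumulants-vanish}): a left block $V$ contributes only if all its factor indices agree, in which case it yields the free cumulant $\kappa_{|V|}(a)$, and likewise on the right with $\kappa_{|W|}(b)$. Crucially, the same index $k_i$ labels both $L_{k_i}$ and $R_{k_i}$, so summing over $(k_1,\dots,k_m)$ forces the indices to be constant on the blocks of the join $\pi_\ell\vee\pi_r$ (computed in the full partition lattice); the number of admissible index tuples is exactly $n^{b(\pi_\ell\vee\pi_r)}$, where $b(\cdot)$ counts blocks. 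Hence
\[
(\varphi\otimes\varphi)(S_n^m) = \sum_{(\pi_\ell,\pi_r)} \Big(\prod_{V\in\pi_\ell}\kappa_{|V|}(a)\Big)\Big(\prod_{W\in\pi_r}\kappa_{|W|}(b)\Big)\, n^{\,b(\pi_\ell\vee\pi_r)-m/2},
\]
a finite sum whose number of terms does not depend on $n$.

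The heart of the argument is the exponent count, which is precisely where meandric systems enter: $b(\pi_\ell\vee\pi_r)$ is the number of connected components of the meander obtained by overlaying $\pi_\ell$ and $\pi_r$. Since $\varphi(a)=\varphi(b)=0$ kills every singleton block ($\kappa_1(a)=\kappa_1(b)=0$), only partitions with all blocks of size $\geq 2$ contribute, giving $b(\pi_\ell),b(\pi_r)\leq m/2$ and hence $b(\pi_\ell\vee\pi_r)\leq \min(b(\pi_\ell),b(\pi_r))\leq m/2$. Thus every exponent is $\leq 0$, and I would show the exponent equals $0$ exactly when $\pi_\ell$ and $\pi_r$ are both pair partitions with $\pi_\ell\vee\pi_r=\pi_\ell=\pi_r$, i.e. $\pi_\ell=\pi_r=\sigma$ for a single $\sigma\in\NC_2(m)$. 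Every other term carries a strictly negative power of $n$ and vanishes in the limit. For $m$ odd $\NC_2(m)=\emptyset$ and the limit is $0$; for $m$ even each surviving $\sigma$ contributes $\kappa_2(a)^{m/2}\kappa_2(b)^{m/2}=\varphi(a^2)^{m/2}\varphi(b^2)^{m/2}$ (using $\kappa_2(a)=\varphi(a^2)$ and $\kappa_2(b)=\varphi(b^2)$ after centring), yielding $|\NC_2(m)|\,\varphi(a^2)^{m/2}\varphi(b^2)^{m/2}$. Specializing to self-adjoint $a,b$ of variance $1$ makes the limit $|\NC_2(m)|$, the even moments of the standard semicircular law.

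The main obstacle I anticipate is twofold: justifying cleanly that the vertically split bi-free cumulants factor into free cumulants in the two tensor legs, with the index coupling dictated by the shared label $k_i$; and proving the sharp combinatorial estimate $b(\pi_\ell\vee\pi_r)\leq m/2$ together with its equality case. The latter is the genuinely new ingredient: it is exactly the statement that, among overlaid pairs of pairings, the maximal number of meander components forces the two pairings to coincide, which is what collapses the bi-free double sum to a single copy of $\NC_2(m)$ and produces the semicircular moments.
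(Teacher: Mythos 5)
Your proposal is correct and follows essentially the same route as the paper: bi-freeness from Lemma \ref{lem:getting-the-bi-free-we-need}, the moment--cumulant expansion restricted to vertically split partitions via Remark \ref{rem:vertically-split}, centring to kill singleton blocks, and the count of admissible index tuples as $n$ raised to the number of components of the overlay of the two pairings, with the limit surviving exactly when the left and right pairings coincide, giving $|\NC_2(m)|$. The only difference is cosmetic: you phrase the key count as $n^{b(\pi_\ell\vee\pi_r)}$ using the join in the partition lattice, while the paper phrases the same quantity as $n^{c(M(\pi))}$ in the language of meandric systems.
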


Before proceeding with the bi-free proof of Theorem \ref{thm:centred-result}, we desire to introduce some terminology that, although can be bypassed in the proof, we feel is enlightening and draws connections to other work and problems.  

\begin{defn}
A meandric system of size $m$ is a picture obtained by drawing $2m$ points along a horizontal line and drawing continuous, curved, non-self-intersecting, looped paths that passes through the line at all of the $2m$ points exactly once.   

Given a meandric system $M$, we will use $c(M)$ to denote the number of closed loops contained in $M$.
\end{defn}
The following figure is an example of a meandric system of size $4$ with 2 closed loops.
\begin{align*}
\begin{tikzpicture}[baseline]
	\draw[thick, dashed] (-0.25,1) --(3.75,1);
	\draw[fill=black] (0, 1) circle (0.075);
	\draw[fill=black] (0.5, 1) circle (0.075);
	\draw[fill=black] (1, 1) circle (0.075);
	\draw[fill=black] (1.5, 1) circle (0.075);
	\draw[fill=black] (2, 1) circle (0.075);
	\draw[fill=black] (2.5, 1) circle (0.075);
	\draw[fill=black] (3, 1) circle (0.075);
	\draw[fill=black] (3.5, 1) circle (0.075);
	\draw [thick,domain=0:180] plot ({1.25*cos(\x)+1.25}, {1.25*sin(\x)+1});
	\draw [thick,domain=0:180] plot ({0.75*cos(\x)+1.25}, {0.75*sin(\x)+1});
	\draw [thick,domain=0:180] plot ({0.25*cos(\x)+1.25}, {0.25*sin(\x)+1});
	\draw [thick,domain=0:180] plot ({0.25*cos(\x)+3.25}, {0.25*sin(\x)+1});
	\draw [thick,domain=180:360] plot ({1.75*cos(\x)+1.75}, {.75*sin(\x)+1});
	\draw [thick,domain=180:360] plot ({.25*cos(\x)+.75}, {0.25*sin(\x)+1});
	\draw [thick,domain=180:360] plot ({.25*cos(\x)+1.75}, {0.25*sin(\x)+1});
	\draw [thick,domain=180:360] plot ({.25*cos(\x)+2.75}, {0.25*sin(\x)+1});
	\end{tikzpicture}
\end{align*}

The meandric systems of size $m$ are in bijective correspondence between pairs of elements of $\NC_2(2m)$ where a meandric system $M$ corresponds to the pair $(\pi, \sigma)$ where $\{i,j\}$ is a block of $\pi$ if and only if $i$ and $j$ are connected via a solid line above the horizontal line and $\{i,j\}$ is a block of $\sigma$ if and only if $i$ and $j$ are connected via a solid line below the horizontal line.  Note $c(M) = m$ exactly when $\pi = \sigma$.

Since $\BNC^a_{\vs}(2m)$ is in bijective correspondence with pairs of elements of $\NC_2(2m)$ as $\pi \in \BNC^a_{\vs}(2m)$ is uniquely defined by an element of $\NC_2(2m)$ on $\{1,3, \ldots, 2m-1\}$ paired with an element of $\NC_2(2m)$ on $\{2, 4, \ldots, 2m\}$, there is a bijection $M$ from $\BNC^a_{\vs}(2m)$ to the set of all meandric systems of size $m$.  This bijection is demonstrated in the following figure.
\begin{align*}
\begin{tikzpicture}[baseline]
	\draw[thick, dashed] (-1,4) -- (-1,-.25) -- (1, -.25) -- (1,4);
	\draw[dotted, thick] (0,-.25) -- (0, 4);
	\draw[fill=black] (-1, 0.25) circle (0.075);
	\node[left] at (-1, 0.25) {$15$};
	\draw[fill=black] (-1, 0.75) circle (0.075);
	\node[left] at (-1, 0.75) {$13$};	
	\draw[fill=black] (-1, 1.25) circle (0.075);
	\node[left] at (-1, 1.25) {$11$};
	\draw[fill=black] (-1, 1.75) circle (0.075);
	\node[left] at (-1, 1.75) {$9$};	
	\draw[fill=black] (-1, 2.25) circle (0.075);
	\node[left] at (-1, 2.25) {$7$};
	\draw[fill=black] (-1, 2.75) circle (0.075);
	\node[left] at (-1, 2.75) {$5$};	
	\draw[fill=black] (-1, 3.25) circle (0.075);
	\node[left] at (-1, 3.25) {$3$};
	\draw[fill=black] (-1, 3.75) circle (0.075);
	\node[left] at (-1, 3.75) {$1$};	
	\draw[fill=black] (1, 0.0) circle (0.075);
	\node[right] at (1, 0.0) {$16$};
	\draw[fill=black] (1, 0.5) circle (0.075);
	\node[right] at (1, 0.5) {$14$};	
	\draw[fill=black] (1, 1.0) circle (0.075);
	\node[right] at (1, 1.0) {$12$};
	\draw[fill=black] (1, 1.5) circle (0.075);
	\node[right] at (1, 1.5) {$10$};	
	\draw[fill=black] (1, 2.0) circle (0.075);
	\node[right] at (1, 2.0) {$8$};
	\draw[fill=black] (1, 2.5) circle (0.075);
	\node[right] at (1, 2.5) {$6$};	
	\draw[fill=black] (1, 3.0) circle (0.075);
	\node[right] at (1, 3.0) {$4$};
	\draw[fill=black] (1, 3.5) circle (0.075);
	\node[right] at (1, 3.5) {$2$};	
	\draw[thick, black] (-1, 3.75) -- (-0.5, 3.75) -- (-0.5, 2.25) -- (-1, 2.25);
	\draw[thick, black] (-1, 3.25) -- (-0.75, 3.25) -- (-0.75, 2.75) -- (-1, 2.75) ;
	\draw[thick, black] (-1, 1.75) -- (-0.75, 1.75) -- (-0.75, 1.25)  -- (-1, 1.25) ;
	\draw[thick, black] (-1, 0.25) -- (-0.75, 0.25) -- (-0.75, 0.75) -- (-1, 0.75) ;
	\draw[thick, black] (1, 3.5)  -- (0.25, 3.5)  -- (0.25, 0.0) -- (1, 0.0) ;
	\draw[thick, black] (1, 3.0) -- (0.5, 3.0) -- (0.5, 0.5) -- (1, 0.5);
	\draw[thick, black] (1, 2.5) -- (0.75, 2.5) -- (0.75, 2.0) -- (1, 2.0) ;
	\draw[thick, black] (1, 1.5) -- (0.75, 1.5) -- (0.75, 1.0)  -- (1, 1.0)  ;
	\draw[thick, ->] (2, 2) -- (2.5,2);
\end{tikzpicture}
\quad
\begin{tikzpicture}[baseline]
	\draw[thick, dashed] (-0.75,4) -- (-0.75,-.25) -- (0.75, -.25) -- (0.75,4);
	\draw[fill=black] (-0.75, 0.25) circle (0.075);
	\node[right] at (-0.75, 0.25) {$15$};
	\draw[fill=black] (-0.75, 0.75) circle (0.075);
	\node[right] at (-0.75, 0.75) {$13$};	
	\draw[fill=black] (-0.75, 1.25) circle (0.075);
	\node[right] at (-0.75, 1.25) {$11$};
	\draw[fill=black] (-0.75, 1.75) circle (0.075);
	\node[right] at (-0.75, 1.75) {$9$};	
	\draw[fill=black] (-0.75, 2.25) circle (0.075);
	\node[right] at (-0.75, 2.25) {$7$};
	\draw[fill=black] (-0.75, 2.75) circle (0.075);
	\node[right] at (-0.75, 2.75) {$5$};	
	\draw[fill=black] (-0.75, 3.25) circle (0.075);
	\node[right] at (-0.75, 3.25) {$3$};
	\draw[fill=black] (-0.75, 3.75) circle (0.075);
	\node[right] at (-0.75, 3.75) {$1$};	
	\draw[fill=black] (0.75, 0.25) circle (0.075);
	\node[left] at (0.75, 0.25) {$16$};
	\draw[fill=black] (0.75, 0.75) circle (0.075);
	\node[left] at (0.75, 0.75) {$14$};	
	\draw[fill=black] (0.75, 1.25) circle (0.075);
	\node[left] at (0.75, 1.25) {$12$};
	\draw[fill=black] (0.75, 1.75) circle (0.075);
	\node[left] at (0.75, 1.75) {$10$};	
	\draw[fill=black] (0.75, 2.25) circle (0.075);
	\node[left] at (0.75, 2.25) {$8$};
	\draw[fill=black] (0.75, 2.75) circle (0.075);
	\node[left] at (0.75, 2.75) {$6$};	
	\draw[fill=black] (0.75, 3.25) circle (0.075);
	\node[left] at (0.75, 3.25) {$4$};
	\draw[fill=black] (0.75, 3.75) circle (0.075);
	\node[left] at (0.75, 3.75) {$2$};	
	\draw[thick, black] (-0.75, 3.75) -- (-1.25, 3.75) -- (-1.25, 2.25) -- (-0.75, 2.25);
	\draw[thick, black] (-0.75, 3.25) -- (-1.0, 3.25) -- (-1.0, 2.75) -- (-0.75, 2.75) ;
	\draw[thick, black] (-0.75, 1.75) -- (-1.0, 1.75) -- (-1.0, 1.25)  -- (-0.75, 1.25) ;
	\draw[thick, black] (-0.75, 0.25) -- (-1.0, 0.25) -- (-1.0, 0.75) -- (-0.75, 0.75) ;
	\draw[thick, black] (0.75, 3.75)  -- (1.5, 3.75)  -- (1.5, 0.25) -- (0.75, 0.25) ;
	\draw[thick, black] (0.75, 3.25) -- (1.25, 3.25) -- (1.25, 0.75) -- (0.75, 0.75);
	\draw[thick, black] (0.75, 2.75) -- (1.0, 2.75) -- (1.0, 2.25) -- (0.75, 2.25) ;
	\draw[thick, black] (0.75, 1.75) -- (1.0, 1.75) -- (1.0, 1.25)  -- (0.75, 1.25)  ;
	\draw[thick, ->] (2, 2) -- (2.5,2);
\end{tikzpicture}
\quad
\begin{tikzpicture}[baseline]
	\draw[thick, dashed] (0,4) -- (0,-.25);
	\draw[fill=black] (0, 0.25) circle (0.075);
	\draw[fill=black] (0, 0.75) circle (0.075);
	\draw[fill=black] (0, 1.25) circle (0.075);
	\draw[fill=black] (0, 1.75) circle (0.075);
	\draw[fill=black] (0, 2.25) circle (0.075);
	\draw[fill=black] (0, 2.75) circle (0.075);
	\draw[fill=black] (0, 3.25) circle (0.075);
	\draw[fill=black] (0, 3.75) circle (0.075);
	\draw[fill=black] (0, 0.25) circle (0.075);
	\draw[thick, black] (0, 3.75) -- (-.5, 3.75) -- (-.5, 2.25) -- (0, 2.25);
	\draw[thick, black] (0, 3.25) -- (-0.25, 3.25) -- (-0.25, 2.75) -- (0, 2.75) ;
	\draw[thick, black] (0, 1.75) -- (-0.25, 1.75) -- (-0.25, 1.25)  -- (0, 1.25) ;
	\draw[thick, black] (0, 0.25) -- (-0.25, 0.25) -- (-0.25, 0.75) -- (0, 0.75) ;
	\draw[thick, black] (0, 3.75)  -- (0.75, 3.75)  -- (0.75, 0.25) -- (0, 0.25) ;
	\draw[thick, black] (0, 3.25) -- (.5, 3.25) -- (.5, 0.75) -- (0, 0.75);
	\draw[thick, black] (0, 2.75) -- (0.25, 2.75) -- (0.25, 2.25) -- (0, 2.25) ;
	\draw[thick, black] (0, 1.75) -- (0.25, 1.75) -- (0.25, 1.25)  -- (0, 1.25)  ;
\end{tikzpicture}
\end{align*}

Note \cites{N2016, NZ2018} demonstrated a connection between meandric systems and sums of products of left and right variable.  The following proof further demonstrates the connection although it eliminates all but the simplest meandric systems through asymptotics.

\begin{proof}[Proof of Theorem \ref{thm:centred-result}.]
For all $k$ let
\[
A_k = a_k \otimes 1 \qqand B_k = 1 \otimes b_k.
\]
Therefore
\[
\{(A_k, 1)\}^n_{k=1} \cup \{(1, B_k)\}^n_{k=1}
\]
is a bi-free collection with respect to $\varphi \otimes \varphi$ by Lemma \ref{lem:getting-the-bi-free-we-need}.  Hence, by Remark \ref{rem:vertically-split}, we obtain 
\begin{align*}
(\varphi \otimes \varphi)(S_n^m) &= \frac{1}{n^{\frac{m}{2}}} \sum_{\theta : [m] \to [n]}  (\varphi \otimes \varphi)\left(\prod_{k=1}^m A_{\theta(k)}B_{\theta(k)} \right) \\
&= \frac{1}{n^{\frac{m}{2}}} \sum_{\theta : [m] \to [n]} \sum_{\pi \in \BNC^a_{\vs}(m)} \kappa_{\pi}(A_{\theta(1)}, B_{\theta(1)}, \ldots, A_{\theta(m)}, B_{\theta(m)})\\
&= \frac{1}{n^{\frac{m}{2}}} \sum_{\pi \in \BNC^a_{\vs}(m)}  \sum_{\theta : [m] \to [n]} \kappa_{\pi}(A_{\theta(1)}, B_{\theta(1)}, \ldots, A_{\theta(m)}, B_{\theta(m)}).
\end{align*}

Given $\theta : [m] \to [n]$, define $\theta' : [2m] \to [n]$ by
\[
\theta'(k) = \begin{cases}
\theta\left(\frac{k}{2}\right) & \text{if $k$ is even} \\
\theta\left(\frac{k+1}{2}\right) & \text{if $k$ is odd} \\
\end{cases}.
\]
Note $\theta'$ defines a partition on $[2m]$ with blocks $(\theta')^{-1}(\{k\})$ for all $k \in [n]$.

Since mixed cumulants vanish, for all $\pi \in \BNC^a_{\vs}(m)$ and $\theta : [m] \to [n]$ we obtain that $\kappa_{\pi}(A_{\theta(1)}, B_{\theta(1)}, \ldots, A_{\theta(m)}, B_{\theta(m)}) = 0$ unless $\pi \leq \theta'$ (i.e. we must be able to `colour' $\pi$ based on $\theta'$).  Moreover, since $(A_k)_{k\geq 1}$ and $(B_k)_{k\geq 1}$ are both identically distributed, there exists a constant $\kappa_\pi$ such that if $\pi \leq\theta'$ then $\kappa_{\pi}(A_{\theta(1)}, B_{\theta(1)}, \ldots, A_{\theta(m)}, B_{\theta(m)}) = \kappa_\pi$.  Hence
\begin{align*}
(\varphi \otimes \varphi)(S_n^m) &= \frac{1}{n^{\frac{m}{2}}} \sum_{\pi \in \BNC^a_{\vs}(m)}  \left| \left\{ \theta : [m] \to [n] \, \mid \,  \pi \leq \theta' \right\}\right|  \kappa_\pi.
\end{align*}

Note since $a$ and $b$ are centred that $\kappa_\pi = 0$ if $\pi$ has a block of with exactly one element. Moreover, if $\pi$ does not have a block with just one element and has a block with at least 3 elements, then
\[
\left| \left\{ \theta : [m] \to [n] \, \mid \,  \pi \leq \theta' \right\}\right| \leq n^{1 + \frac{m-3}{2}} = n^{\frac{m-1}{2}}.
\]
Therefore, since the sum is finite, the above $\pi \in \BNC^a_{\vs}(m)$ contribute 0 to the limit as $n$ tends to infinity.  Hence 
\[
\lim_{n \to \infty} (\varphi \otimes \varphi)(S_n^m) = \lim_{n \to \infty} \frac{1}{n^{\frac{m}{2}}} \sum_{\pi \in \BNC^a_{\vs,2}(m)}  \left| \left\{ \theta : [m] \to [n] \, \mid \,  \pi \leq \theta' \right\}\right|  \kappa_\pi
\]
provide the limit on the right exists.  Thus $\lim_{n \to \infty} (\varphi \otimes \varphi)(S_n^m) = 0$ when $m$ is odd.

Assume $m$ is even.  For $\pi \in \BNC^a_{\vs,2}(m)$, consider the meandric system $M(\pi)$ corresponding to $\pi$ of size $\frac{m}{2}$.  Note if $k$ and $\ell$ are in the same loop of $M(\pi)$ and $\theta : [m] \to [n]$ is such that $\pi \leq \theta'$, then $\theta(k) = \theta(\ell)$.  Thus 
\[
\left| \left\{ \theta : [m] \to [n] \, \mid \, \pi \leq \theta' \right\}\right| = n^{c(M(\pi))}.
\]
Since $M(\pi)$ can have at most $\frac{m}{2}$ connected components, we see that
\[
\lim_{n \to\infty} \frac{1}{n^{\frac{m}{2}}} \left| \left\{ \theta : [m] \to [n] \, \mid \,  \pi \leq \theta' \right\}\right|   = \begin{cases}
1 & \text{if }c(M(\pi)) = \frac{m}{2} \\
0 & \text{if } c(M(\pi)) \neq \frac{m}{2}
\end{cases}.
\]
Hence
\[
\lim_{n \to \infty} (\varphi \otimes \varphi)(S_n^m) = \sum_{\substack{\pi \in \BNC^a_{\vs,2}(m) \\ c(M(\pi)) = \frac{m}{2} }   } \kappa_\pi.
\]
By previous discussions, the set of all $\pi \in \BNC^a_{\vs,2}(m)$ with $c(M(\pi)) = \frac{m}{2}$ is in bijective correspondence with  $\NC_2(m)$ and 
\[
\kappa_\pi = \kappa_{1_2}(a,a)^\frac{m}{2} \kappa_{1_2}(b,b)^\frac{m}{2} =  \varphi(a^2)^\frac{m}{2}\varphi(b^2)^\frac{m}{2} 
\]
for all such $\pi$ thereby completing the proof.
\end{proof}

\section{CLT of Tensors of Freely Independent Operators}
\label{sec:general}

In this section, a proof of \cite{LSY2024}*{Theorem 1.1} using bi-free techniques will be provided.  To begin, some notation is required.

Let $\mu_{sc}$ denote the distribution of the centred semicircular operator with variance 1.  Given a self-adjoint random variable $a$ with distribution $\mu$, for $t \in (0, 1)$ the distribution of $ta$ is denoted $t\mu$ and can computed via the dilation of $\mu$ by $t$: $(t\mu)(A) = \mu(t^{-1}A)$ for all Borel subsets $A \subseteq \bR$.  For $q \in [0,1)$, let 
\[
\mu_q = \sqrt{q}\left(\frac{1}{\sqrt{2}} \mu_{sc} \oplus \frac{1}{\sqrt{2}} \mu_{sc}\right) \boxplus \sqrt{1-q} \mu_{sc}
\]
where $\oplus$ denotes the distribution obtain by adding classically independent copies of the random variables and $\boxplus$ denotes the distribution obtained by adding freely independent copies of the random variables (i.e. the free additive convolution).  The goal of this section is to show that $\mu_q$ is the asymptotic distribution of averaging tensors of freely independent random variables.

\begin{thm}[\cite{LSY2024}*{Theorem 1.1}]\label{thm:main}
Let $(\A, \varphi)$ be a non-commutative probability space and let $a, b \in \A$ be such that $\varphi(a) = \varphi(b) = \lambda$ and $\mathrm{var}(a) = \mathrm{var}(b) = \sigma^2 \neq 0$. Let
\[
\delta^2 = \mathrm{var}(a\otimes a) = \mathrm{var}(b\otimes b) = \sigma^2(\sigma^2 + 2\lambda^2)
\] 
and
\[
q = \frac{2\lambda^2}{\sigma^2 + 2 \lambda^2} \in [0,1).
\]

Let $(a_k)_{k\geq 1}$ be a sequence of freely independent copies of $a$ in $(\A, \varphi)$ and let $(b_k)_{k\geq 1}$ be a sequence of freely independent copies of $b$ in $(\A, \varphi)$.  Let
\[
S_n = \frac{1}{\delta \sqrt{n}} \sum^n_{k=1} (a_k \otimes b_k - \lambda 1 \otimes \lambda 1) \in \A \otimes \A.
\]
Then the distribution of $(S_n)_{\geq 1}$ with respect to $\varphi \otimes \varphi$ converges in distribution to $\mu_q$.
\end{thm}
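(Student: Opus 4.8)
The plan is to mirror the proof of Theorem~\ref{thm:centred-result}: first center the variables, then read off from the bi-non-crossing moment expansion the families of pairings that survive the central-limit normalization, and finally show that these families assemble into the free convolution defining $\mu_q$. Set $\mathring{a}_k = a_k - \lambda$, $\mathring{b}_k = b_k - \lambda$, and put $\mathring{A}_k = \mathring{a}_k \otimes 1$ and $\mathring{B}_k = 1 \otimes \mathring{b}_k$, so that $(\varphi\otimes\varphi)(\mathring{A}_k)=(\varphi\otimes\varphi)(\mathring{B}_k)=0$ and $(\varphi\otimes\varphi)(\mathring{A}_k^2)=(\varphi\otimes\varphi)(\mathring{B}_k^2)=\sigma^2$. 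A direct expansion gives $a_k\otimes b_k - \lambda 1\otimes\lambda 1 = \mathring{A}_k\mathring{B}_k + \lambda\mathring{A}_k + \lambda\mathring{B}_k$, hence $\delta\sqrt{n}\,S_n = \sum_{k=1}^n(\mathring{A}_k\mathring{B}_k + \lambda\mathring{A}_k + \lambda\mathring{B}_k)$. By Lemma~\ref{lem:getting-the-bi-free-we-need} the family $\{(\mathring{A}_k,1)\}_{k\ge1}\cup\{(1,\mathring{B}_k)\}_{k\ge1}$ is bi-free, so each $\mathring{A}_k$ is a left letter, each $\mathring{B}_k$ a right letter, bi-free cumulants mixing distinct indices $k$ vanish, and by Remark~\ref{rem:vertically-split} only vertically split partitions contribute.

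Next I expand $(\varphi\otimes\varphi)(S_n^m)$ as in Theorem~\ref{thm:centred-result}. After replacing each of the $m$ factors by the three summands of $\mathring{A}\mathring{B}+\lambda\mathring{A}+\lambda\mathring{B}$, one obtains words in the centred left/right letters carrying a weight $\lambda$ for every factor that contributed a single $\mathring{A}$ or $\mathring{B}$; inserting the bi-free moment--cumulant formula and reorganizing by the colour map $\theta$, vanishing of mixed cumulants forces $\pi\le\theta'$ with all blocks of $\pi$ lying within colour classes, and centredness forces all blocks to have size $\ge2$. Since a colour used once cannot fill a vertically split block of size $\ge2$, every colour is used at least twice; as the count of such $\theta$ scales like $n^{c}$ for $c$ distinct colours against the normalization $n^{-m/2}$, only $c=m/2$ survives, whence every surviving $\theta$ uses exactly $m/2$ colours, each exactly twice. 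The induced $\pi$ is then a vertically split bi-non-crossing pairing, and the two factors sharing a colour must have compatible types: the only possibilities are $\{\mathring{A}\mathring{B},\mathring{A}\mathring{B}\}$, $\{\mathring{A},\mathring{A}\}$, and $\{\mathring{B},\mathring{B}\}$, with per-pair normalized weights $\sigma^4/\delta^2=1-q$, $\lambda^2\sigma^2/\delta^2=q/2$, and $\lambda^2\sigma^2/\delta^2=q/2$ respectively. In particular odd moments vanish, and for even $m$ the limit is a sum over type-decorated bi-non-crossing pairings in $\BNC^a_{\vs,2}(m)$.

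I then identify the three families with $\mu_q$. The $\{\mathring{A}\mathring{B},\mathring{A}\mathring{B}\}$ pairs are exactly those counted by Theorem~\ref{thm:centred-result} for the centred variables $\mathring{a},\mathring{b}$, so they build a semicircular element $\gamma$ of variance $\sigma^4$; the $\{\mathring{A},\mathring{A}\}$ and $\{\mathring{B},\mathring{B}\}$ pairs build the free-CLT limits $\alpha$ of $\tfrac1{\sqrt n}\sum_k\mathring{A}_k$ and $\beta$ of $\tfrac1{\sqrt n}\sum_k\mathring{B}_k$, each semicircular of variance $\sigma^2$, with $(\alpha,\beta)$ a bi-free (hence classically independent) left/right pair. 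Granting that $\gamma$ is freely independent from $\{\alpha,\beta\}$, one gets $S_n\to\frac1\delta(\gamma+\lambda\alpha+\lambda\beta)$, where $\frac1\delta\gamma\sim\sqrt{1-q}\,\mu_{sc}$ is free from the classically independent pair $\frac\lambda\delta\alpha,\frac\lambda\delta\beta$, each semicircular of variance $q/2$. Thus $\frac\lambda\delta(\alpha+\beta)\sim\sqrt{q}\bigl(\tfrac1{\sqrt2}\mu_{sc}\oplus\tfrac1{\sqrt2}\mu_{sc}\bigr)$, and freeness of $\gamma$ from this pair makes the distribution of the sum the free convolution $\mu_q$.

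The main obstacle is the freeness claim of the previous paragraph. Since $\gamma$ is the limit of the bilinear sum $\tfrac1{\sqrt n}\sum_k\mathring{A}_k\mathring{B}_k$, it shares its left letters with $\alpha$ and its right letters with $\beta$, so free independence is not automatic and must be extracted from the vanishing, after the $n^{-m/2}$ normalization, of the mixed bi-non-crossing pairings that link an $\{\mathring{A}\mathring{B},\mathring{A}\mathring{B}\}$ pair to an $\{\mathring{A},\mathring{A}\}$ or $\{\mathring{B},\mathring{B}\}$ pair. Equivalently, one must verify that the type-decorated bi-non-crossing pairings reproduce the free moment--cumulant expansion of $\mu_q$; the subtlety is that, although all surviving blocks are pairs, $\mu_q$ is not semicircular when $\lambda\ne0$, its higher free cumulants arising entirely from the classical independence of the left family $\{\mathring{A}_k\}$ and the right family $\{\mathring{B}_k\}$ feeding into $\alpha+\beta$. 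Making precise the matching between the three pair-types and the labels of $\sqrt{1-q}\,c+\sqrt{q/2}\,t_1+\sqrt{q/2}\,t_2$ (with $c$ semicircular free from the classically independent semicircular pair $t_1,t_2$), and in particular reconciling bi-non-crossingness for the doubled $\{\mathring{A}\mathring{B}\}$-words with that for the single-letter words, is the technical heart of the argument.
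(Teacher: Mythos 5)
Your reduction is sound as far as it goes, and it genuinely differs from the paper's setup: you centre first and expand each summand as $\mathring{A}_k\mathring{B}_k + \lambda\mathring{A}_k + \lambda\mathring{B}_k$, whereas the paper keeps the letters non-centred, works with $Z_k = A_kB_k - \lambda^2(1\otimes 1)$ expanded over a two-letter alphabet of scalar/non-scalar slots, and kills singleton blocks by a separate argument. Your identification of the surviving colourings (pair partitions with type-matched blocks) and of the per-pair weights $\sigma^4/\delta^2 = 1-q$ and $\lambda^2\sigma^2/\delta^2 = q/2$ is correct. Two cautions, though. First, the surviving configurations are not indexed by $\BNC^a_{\vs,2}(m)$: your words have variable length (one or two letters per factor), so the relevant $\chi$ is word-dependent. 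Second, and more importantly, the pair partition induced on the $m$ factor positions need \emph{not} be non-crossing: an $\{\mathring{A},\mathring{A}\}$ pair may cross a $\{\mathring{B},\mathring{B}\}$ pair, since vertical splitness only forces the left letters to be non-crossing among themselves and the right letters among themselves. These crossings are exactly what makes $\mu_q$ non-semicircular; if the factor-level pairing were forced to be non-crossing, every pairing would carry total weight $\left((1-q)+\tfrac q2+\tfrac q2\right)^{m/2} = 1$ and the limit would collapse to $\mu_{sc}$, which is false for $\lambda \neq 0$.

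The genuine gap is the one you flag yourself: you never prove that the sum over type-decorated pairings (with the crossing rules: $AB$-pairs cross nothing, $A$-pairs do not cross $A$- or $AB$-pairs, $B$-pairs do not cross $B$- or $AB$-pairs) equals the moment sequence of $\sqrt{1-q}\,c + \sqrt{q/2}\,(t_1+t_2)$ with $c$ semicircular and free from the classically independent pair $(t_1,t_2)$, i.e.\ the moments of $\mu_q$. Writing ``granting that $\gamma$ is freely independent from $\{\alpha,\beta\}$'' assumes precisely what must be shown: asymptotic freeness of $\frac{1}{\sqrt n}\sum_k \mathring{A}_k\mathring{B}_k$ from the pair $\left(\frac{1}{\sqrt n}\sum_k \mathring{A}_k, \frac{1}{\sqrt n}\sum_k \mathring{B}_k\right)$ is not automatic (they share all their letters), and it is the entire content of the theorem beyond the second moment. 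The paper closes this gap combinatorially: it first derives a recurrence for the moments of $\mu_q$ (Corollary~\ref{cor:recurrence-relation}, whose coefficients $2(q/2)^j|\P_2^{\mathrm{bicon}}(2j)|$ come from the free cumulants of $\tfrac{1}{\sqrt 2}\mu_{sc}\oplus\tfrac{1}{\sqrt 2}\mu_{sc}$), and then shows the limiting moments satisfy the same recurrence by decomposing each surviving pair partition along the connected component of its intersection graph containing the block of $1$, proving that only \emph{bipartite} connected components contribute and that each admits exactly two alternating $\tau_\ell/\tau_r$ decorations. Your route could in principle be completed differently — by a Wick-type computation showing that the mixed moments of $(c,t_1,t_2)$ count exactly your decorated pairings (classical factorization inside the cells cut out by the non-crossing $c$-pairs, plus vanishing of mixed free cumulants) — but that argument still has to be written; as it stands, the proposal stops exactly at the technical heart of the theorem.
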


Before proceeding with the bi-free proof of Theorem \ref{thm:main}, we first require some basic information about $\mu_q$. Since the best approach to obtain said information is via classical and free independence, we simply refer to \cite{LSY2024} as bi-free independence can yield no new information.

Given a partition $\pi \in \P(n)$, two distinct blocks $V, W \in \pi$ are said to \emph{interest} (or cross) each other if there exists $v_1, v_2  \in V$ and $w_1, w_2 \in W$ such that $v_1 < w_1 < v_2 < w_2$.  The \emph{intersection graph of $\pi$} is the graph whose vertices are the blocks of $\pi$ where two blocks are connected via an edge if and only if they intersect.  Let $\P_2^{\mathrm{con}}(n)$ denote the set of all pair partitions on $[n]$ with connected intersection graph and let $\P_2^{\mathrm{bicon}}(n)$ denote the set of all pair partitions on $[n]$ whose intersection graph is bipartite and connected.  Note $1_2$ is the only element of $\P_2^{\mathrm{bicon}}(2)$.

\begin{lem}[\cite{LSY2024}*{Proposition 3.1}]
Let
\[
\mu_1 = \frac{1}{\sqrt{2}} \mu_{sc} \oplus \frac{1}{\sqrt{2}} \mu_{sc}
\]
where $\oplus$ denotes the distribution obtain by adding classically independent copies of the random variables.  If $\kappa_n(\mu_1)$ denotes the $n^{\mathrm{th}}$ free cumulant of $\mu_1$, then
\[
\kappa_n(\mu_1) = \begin{cases}
0 & \text{if $n$ is odd} \\
2\left(\frac{1}{2} \right)^{\frac{n}{2}} | \P_2^{\mathrm{bicon}}(n) | & \text{if $n$ is even}
\end{cases}.
\]
\end{lem}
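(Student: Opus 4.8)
The plan is to realize $\mu_1$ concretely, express its moments as a weighted sum over pair partitions, and then invert the free moment--cumulant relation through a connectedness decomposition. Spelling out $\oplus$, the measure $\mu_1$ is the law of $X=\frac{1}{\sqrt 2}(s_1+s_2)$, where $s_1,s_2$ are classically independent semicircular variables of variance $1$. Since each $s_i$ is semicircular its moments count non-crossing pairings, and classical independence makes joint moments factor over colours. First I would therefore expand, writing $m_n=\int x^n\,d\mu_1=\mathbb{E}(X^n)$,
\[
m_n = 2^{-n/2}\sum_{\epsilon:[n]\to\{1,2\}}\mathbb{E}\big(s_{\epsilon(1)}\cdots s_{\epsilon(n)}\big) = 2^{-n/2}\sum_{\epsilon}\big|\NC_2(\epsilon^{-1}(1))\big|\,\big|\NC_2(\epsilon^{-1}(2))\big|,
\]
and then reorganize the resulting sum over triples $(\epsilon,\pi_1,\pi_2)$, with $\pi_i$ a non-crossing pairing of the colour-$i$ points, into a sum over the single pair partition $\pi=\pi_1\cup\pi_2\in\P_2(n)$. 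The colourings contributing to a fixed $\pi$ are exactly the $2$-colourings of the blocks of $\pi$ in which no two crossing blocks share a colour, i.e. the proper $2$-colourings of the intersection graph $G(\pi)$. This gives
\[
m_n = 2^{-n/2}\sum_{\pi\in\P_2(n)} N(\pi),\qquad N(\pi)=\#\{\text{proper }2\text{-colourings of }G(\pi)\},
\]
where $N(\pi)=2^{c(\pi)}$ if $G(\pi)$ is bipartite, with $c(\pi)$ the number of connected components, and $N(\pi)=0$ otherwise; in particular $m_n=0$ for $n$ odd.

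Next I would invert $m_n=\sum_{\sigma\in\NC(n)}\prod_{V\in\sigma}\kappa_{|V|}(\mu_1)$ by exhibiting a weight-preserving bijection
\[
\P_2(n)\;\longleftrightarrow\;\Big\{(\sigma,(\rho_V)_{V\in\sigma}) : \sigma\in\NC(n),\ \rho_V\in\P_2^{\mathrm{con}}(V)\Big\}.
\]
Given $\pi$, its connected components in $G(\pi)$ are pairings $\rho_V$ of their supports $V$, and the supports form a partition $\sigma$ of $[n]$; the substance of the bijection is that $\sigma$ is non-crossing and that, conversely, assembling connected pieces along a non-crossing $\sigma$ produces no crossings between distinct pieces. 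Granting this, $G(\pi)$ is the disjoint union of the $G(\rho_V)$, so $N(\pi)=\prod_V N(\rho_V)$ while $2^{-n/2}=\prod_V 2^{-|V|/2}$. Comparing with the moment--cumulant formula and invoking the uniqueness of free cumulants yields
\[
\kappa_n(\mu_1)=2^{-n/2}\sum_{\rho\in\P_2^{\mathrm{con}}(n)}N(\rho).
\]
Since a connected bipartite graph has exactly $2$ proper $2$-colourings and a connected non-bipartite graph has none, the connected sum collapses to $2^{-n/2}\cdot 2\,|\P_2^{\mathrm{bicon}}(n)| = 2(1/2)^{n/2}|\P_2^{\mathrm{bicon}}(n)|$, which is the claim, and oddness of $n$ forces $\P_2(n)=\emptyset$, hence $\kappa_n(\mu_1)=0$.

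The main obstacle is the combinatorial fact underpinning the bijection: the supports of the connected components of a pair partition form a non-crossing partition of $[n]$. I would establish this by a planarity/Jordan-curve argument --- draw the blocks as pairwise non-touching arcs in the upper half-plane, note that the arc-union of each connected component is a connected set meeting the real line exactly in its support, and observe that two disjoint such connected sets cannot have interleaving supports, since an interleaving $a<b<c<d$ with $a,c$ in one support and $b,d$ in the other would force a path in one component to cross an arc of the other. The remaining points (that forbidding crossing blocks from sharing a colour is precisely a proper colouring of $G(\pi)$, that the count is $2^{c}$ in the bipartite case, and that reassembly introduces no spurious crossings) are routine once this planarity statement is in place.
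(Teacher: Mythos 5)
Note first that the paper does not actually prove this lemma: it is quoted from \cite{LSY2024}*{Proposition 3.1}, and the author explicitly defers to that reference on the grounds that the statement involves only classical and free independence, so ``bi-free independence can yield no new information.'' Your proposal therefore cannot be compared with an in-paper argument; what you have written is a self-contained proof, and it is correct. Realizing $\mu_1$ as the law of $\frac{1}{\sqrt{2}}(s_1+s_2)$ with $s_1,s_2$ classically independent standard semicirculars, expanding $m_n$ over colourings $\epsilon$, and recognizing that the triples $(\epsilon,\pi_1,\pi_2)$ biject with pairs (pair partition $\pi$, proper $2$-colouring of its intersection graph) gives $m_n = 2^{-n/2}\sum_{\pi\in\P_2(n)}N(\pi)$ with $N(\pi)=2^{c(\pi)}$ in the bipartite case and $0$ otherwise; the component decomposition $\pi\leftrightarrow(\sigma,(\rho_V))$ then exhibits the moments as the multiplicative extension over $\NC(n)$ of the sequence $\tilde\kappa_k = 2^{-k/2}\sum_{\rho\in\P_2^{\mathrm{con}}(k)}N(\rho)$, and uniqueness of free cumulants plus the fact that a connected graph has $2$ proper $2$-colourings if bipartite and none otherwise yields exactly $2(1/2)^{n/2}|\P_2^{\mathrm{bicon}}(n)|$. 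A sanity check at $n=4$ ($m_4=5/2$, $\kappa_4=1/2$, $|\P_2^{\mathrm{bicon}}(4)|=1$) confirms the bookkeeping.

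One spot in your write-up needs repair, though it is a slip of phrasing rather than a structural gap. You propose to ``draw the blocks as pairwise non-touching arcs in the upper half-plane'' and then to use that ``the arc-union of each connected component is a connected set.'' These two statements contradict each other: arcs of two crossing blocks \emph{must} intersect (that is precisely the Jordan-curve fact), and a union of pairwise disjoint arcs meeting the line in distinct points is never connected. What you need is a drawing in which arcs intersect if and only if the corresponding blocks cross --- for instance, semicircles on the given diameters. Then the union of arcs over one component of the intersection graph is path-connected, unions over distinct components are disjoint, and your interleaving argument goes through: with $a<b<a'<b'$, a simple path from $b$ to $b'$ inside one component, closed up along the real axis, separates the plane, and a path from $a$ to $a'$ in the other component, which avoids that curve except possibly at its endpoint $a'$, must nevertheless pass from the unbounded to the bounded complementary region --- a contradiction. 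With that correction, the key combinatorial lemma (supports of the connected components of the crossing graph of a pair partition form a non-crossing partition) is established, and the remaining steps are routine as you indicate. The net effect of your argument is to make the paper self-contained at a point where it chose citation over proof, at the cost of about a page of classical-plus-free combinatorics.
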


Using elementary facts from the combinatorics of free probability (\cite{NS2006}), we obtain the following recursive description of the moments of $\mu_q$.

\begin{cor}\label{cor:recurrence-relation}
Let $M_n$ denote the $n^{\mathrm{th}}$ moment of $\mu_q$.  Then $M_n = 0$ if $n$ is odd, $M_2 = 1$, and
\[
M_n = \sum_{\substack{0 \leq k_1, k_2 \leq n-2 \\ k_1 + k_2 = n-2}} M_{k_1} M_{k_2} + \sum_{j \geq 2} \sum_{\substack{0 \leq k_1, \ldots, k_{2j} \leq n-2j \\ k_1 + \cdots + k_{2j} = n-2j}} 2\left(\frac{q}{2} \right)^{j} |\P_2^{\mathrm{bicon}}(2j) | M_{k_1} \cdots M_{k_{2j}}
\]
if $n$ is even and $n \geq 4$.
\end{cor}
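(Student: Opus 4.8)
The plan is to reduce everything to a free cumulant computation and then invoke the standard moment--cumulant recursion of free probability. First I would record the two structural properties of free cumulants that make $\mu_q$ tractable: they are additive under free additive convolution, so $\kappa_n(\nu_1 \boxplus \nu_2) = \kappa_n(\nu_1) + \kappa_n(\nu_2)$, and they scale homogeneously under dilation, so $\kappa_n(t\nu) = t^n \kappa_n(\nu)$. Applying these to
\[
\mu_q = \sqrt{q}\,\mu_1 \boxplus \sqrt{1-q}\,\mu_{sc}, \qquad \mu_1 = \tfrac{1}{\sqrt 2}\mu_{sc} \oplus \tfrac{1}{\sqrt 2}\mu_{sc},
\]
gives
\[
\kappa_n(\mu_q) = q^{\frac{n}{2}} \kappa_n(\mu_1) + (1-q)^{\frac{n}{2}} \kappa_n(\mu_{sc}).
\]

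Next I would feed in the explicit cumulants. The preceding Lemma gives $\kappa_n(\mu_1) = 0$ for odd $n$ and $\kappa_n(\mu_1) = 2(1/2)^{n/2}|\P_2^{\mathrm{bicon}}(n)|$ for even $n$, while the semicircular law of variance $1$ has $\kappa_2(\mu_{sc}) = 1$ and all other cumulants zero. Substituting and cancelling the powers of $2$ yields that $\kappa_n(\mu_q) = 0$ for odd $n$, that for even $n = 2j$ with $j \geq 2$
\[
\kappa_{2j}(\mu_q) = 2\left(\tfrac{q}{2}\right)^{j} |\P_2^{\mathrm{bicon}}(2j)|,
\]
and that for $n = 2$, using $|\P_2^{\mathrm{bicon}}(2)| = 1$,
\[
\kappa_2(\mu_q) = 2\cdot\tfrac{q}{2}\cdot 1 + (1-q) = 1.
\]
The one slightly delicate point is precisely this $n = 2$ case, where the contributions of the two convolution factors must recombine to exactly $1$; it is the single place where the explicit value of $q$ is irrelevant and only $|\P_2^{\mathrm{bicon}}(2)| = 1$ is used.

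Finally I would invoke the standard moment recursion. Writing $M_n = \sum_{\pi\in\NC(n)}\prod_{V\in\pi}\kappa_{|V|}(\mu_q)$ and conditioning on the block $V$ of $\pi$ containing the element $1$: if $|V| = r$, the $r$ elements of $V$ cut $[n]$ into $r$ (possibly empty) intervals that are filled independently by non-crossing partitions, so that
\[
M_n = \sum_{r\geq 1} \kappa_r(\mu_q) \sum_{\substack{k_1 + \cdots + k_r = n - r \\ k_i \geq 0}} M_{k_1}\cdots M_{k_r}.
\]
Since every odd cumulant of $\mu_q$ vanishes, only even $r$ survive; isolating the $r = 2$ term (with $\kappa_2 = 1$) as the first sum and writing $r = 2j$ with $j \geq 2$ for the remainder, with the cumulant values just computed, produces exactly the claimed recurrence. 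The vanishing of $M_n$ for odd $n$ then follows directly from the moment--cumulant formula, since any non-crossing partition with all blocks of even cardinality forces $n$ to be even, and $M_2 = \kappa_2(\mu_q) = 1$. The substantive content is therefore concentrated in the cumulant bookkeeping of the second and third displays; the recursion itself is a routine consequence of non-crossing combinatorics, and I expect the only real care needed is in aligning the block-size indexing ($2j$ versus $j$) and the dilation exponents.
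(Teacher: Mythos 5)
Your proposal is correct and takes essentially the same route as the paper's proof: the paper computes $\kappa_n(\mu_q)$ by realizing $Z = \sqrt{1-q}\,S + \sqrt{q}\,T$ with $S,T$ free and invoking multilinearity plus the vanishing of mixed cumulants, which is precisely your additivity-under-$\boxplus$ and dilation-homogeneity argument (including the same $n=2$ cancellation $q + (1-q) = 1$ via $|\P_2^{\mathrm{bicon}}(2)| = 1$). It then derives the recurrence exactly as you do, by conditioning on the block of $\pi \in \NC(n)$ containing $1$ and noting that only even block sizes contribute.
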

\begin{proof}
Let $S$ and $T$ be operators with distributions $\mu_{sc}$ and $\mu_1$ respectively that are freely independent with respect to a state $\varphi$.  Hence if $Z = \sqrt{1-q}S + \sqrt{q}T$ then
\[
M_n = \varphi(Z^n)
\]
for all $n$.  Since the free cumulants are multilinear, since mixed free cumulants vanish, since $\kappa_2(S) = 1$, and since $\kappa_n(S) = 0$ for all $n \neq 2$, we obtain that
\begin{align*}
\kappa_n(Z) &= \left(\sqrt{1-q}\right)^n\kappa_n(S) + \sqrt{q}^n\kappa_n(T)\\
&= \begin{cases}
0 & \text{if $n$ is odd} \\
1 & \text{if $n=2$} \\
2\left(\frac{q}{2} \right)^{\frac{n}{2}} |\P_2^{\mathrm{bicon}}(n) | & \text{if $n$ is even and $n\geq 4$}
\end{cases}.
\end{align*}

Recall the bi-free moment-cumulant formula (\ref{eq:moment}) reduces to the free moment-cumulant formula
\[
M_n = \varphi(Z^n) = \sum_{\pi \in \NC(n)} \kappa_\pi(Z, Z, \ldots, Z).
\]
Note $\kappa_k(Z) = 0$ when $k$ is odd. Thus, if $n$ is odd then $M_n = 0$ since every $\pi \in \NC(n)$ must have a block with an odd number of elements.  Moreover
\[
M_2 = \kappa_2(Z) = 1.
\]

To obtain the recursive formula, we will now use the common trick from \cite{NS2006} of summing over all possible partitions with the same block containing 1 in the moment-cumulant formula for $M_n$.  When doing this sum, we end up with the free cumulant corresponding to the block containing 1 and the joint moment of the operators in each region isolated by this block.  We believe the following figure aids in the comprehension of this argument.

\begin{align*}
\begin{tikzpicture}[baseline]
	\draw[thick, dashed] (-.25,0) -- (7.25, 0);
	\draw[black, fill=black] (0,0) circle (0.075);
	\draw[black, fill=black] (0.5,0) circle (0.075);
	\draw[black, fill=black] (1,0) circle (0.075);
	\draw[black, fill=black] (1.5,0) circle (0.075);
	\draw[black, fill=black] (2,0) circle (0.075);
	\draw[black, fill=black] (2.5,0) circle (0.075);
	\draw[black, fill=black] (3,0) circle (0.075);
	\draw[black, fill=black] (3.5,0) circle (0.075);
	\draw[black, fill=black] (4,0) circle (0.075);
	\draw[black, fill=black] (4.5,0) circle (0.075);
	\draw[black, fill=black] (5,0) circle (0.075);
	\draw[black, fill=black] (5.5,0) circle (0.075);
	\draw[black, fill=black] (6,0) circle (0.075);
	\draw[black, fill=black] (6.5,0) circle (0.075);
	\draw[black, fill=black] (7,0) circle (0.075);
	\node[below] at (0,0) {$1$};
	\node[below] at (1.5,0) {$i_2$};
	\node[below] at (3.5,0) {$i_3$};
	\node[below] at (6,0) {$i_4$};
	\draw[black, thick] (0,0) -- (0, 2) -- (6, 2) -- (6,0);
	\draw[black, thick] (1.5,0) -- (1.5, 2);
	\draw[black, thick] (3.5,0) -- (3.5, 2);
	\node at (0.75, 1) {\tiny$\sum \kappa \! =\! M_2$};
	\node at (0.75, 0.6) {\tiny$\pi_1$};
	\node at (2.5, 1) {\tiny$\sum \kappa \! =\! M_3$};
	\node at (2.5, 0.6) {\tiny$\pi_2$};
	\node at (4.75, 1) {\tiny$\sum \kappa \! =\! M_4$};
	\node at (4.75, 0.6) {\tiny$\pi_3$};
	\node at (6.75, 1) {\tiny$\sum \kappa \! =\! M_2$};
	\node at (6.75, 0.6) {\tiny$\pi_4$};
	\draw[thick] (0.75,1) ellipse (.6 and .6);
	\draw[thick] (2.5,1) ellipse (.6 and .6);
	\draw[thick] (4.75,1) ellipse (.6 and .6);
	\draw[thick] (6.75,1) ellipse (.6 and .6);
	\draw[thick] (0.5, 0) -- (0.5, .45);
	\draw[thick] (1, 0) -- (1, .45);
	\draw[thick] (6.5, 0) -- (6.5, .45);
	\draw[thick] (7, 0) -- (7, .45);
	\draw[thick] (2.5, 0) -- (2.5, .4);
	\draw[thick] (2, 0) -- (2, .67);
	\draw[thick] (3, 0) -- (3, .67);
	\draw[thick] (4.5, 0) -- (4.5, .45);
	\draw[thick] (5, 0) -- (5, .45);
	\draw[thick] (4, 0) -- (4.25, .67);
	\draw[thick] (5.5, 0) -- (5.25, .67);
\end{tikzpicture}
\end{align*}

By our knowledge of the free cumulants of $Z$, we know that $\kappa_\pi(Z, Z, \ldots, Z)  = 0$ unless the block of $\pi$ containing $1$ has an even number of elements.  For such a $\pi \in \NC(n)$, let $1 = i_1 < i_2 < \cdots < i_{2j}$ be the elements of the block of $\pi$ containing 1 for some $j \geq 1$.  There is a bijection from such $\pi$ to $2j$-tuples $(\pi_1, \ldots, \pi_{2j})$ where $\pi_k= \pi|_{\{i_k+1, \ldots, i_{k+1}-1\}}$ is non-crossing on $\{i_k+1, \ldots, i_{k+1}-1\}$ for $1 \leq k \leq 2j$ where $i_{2j+1} = n+1$.  Let $k_h = i_{h+1} - i_h -1$ for $1 \leq h \leq 2j$.  Then $0 \leq k_1,\ldots, k_{2j} \leq n-2j$ and $k_1 + \cdots + k_{2j} = n-2j$.  Since for such a $\pi$ we have
\[
\kappa_\pi(Z, Z, \ldots, Z) = \kappa_{2j}(Z) \kappa_{\pi_1}(Z, \ldots, Z)\cdots \kappa_{\pi_{2j}}(Z, \ldots, Z),
\]
we obtain $\kappa_{2j}(Z) M_{k_1} \cdots M_{k_{2j}}$ by summing $\kappa_\pi(Z, Z, \ldots, Z)$ over all $\pi \in \NC(n)$ with $\{1 < i_1 < \cdots < i_{2j-1}\}$ a block of $\pi$.  Since for every $j \geq 1$ and $0 \leq k_1,\ldots, k_{2j} \leq n-2j$ such that $k_1 + \cdots + k_{2j} = n-2j$ there is a selection of $1 = i_1 < i_2 < \cdots < i_{2j}$ that yield $k_1, \ldots, k_{2j}$, the result follows.
\end{proof}

\begin{proof}[Proof of Theorem \ref{thm:main}.]
For all $k$ let
\[
A_k = a_k \otimes 1 \qqand B_k = 1 \otimes b_k.
\]
Therefore
\[
\{(A_k, 1)\}^n_{k=1} \cup \{(1, B_k)\}^n_{k=1}
\]
is a bi-free collection with respect to $\varphi \otimes \varphi$ by Lemma \ref{lem:getting-the-bi-free-we-need}.  For all $k$ let
\[
Z_k = A_kB_k - \lambda^2 (1 \otimes 1) \in \A \otimes \A.
\]
Hence
\[
(\varphi \otimes \varphi)(S^m_n) = \frac{1}{\delta^m n^{\frac{m}{2}}} \sum_{\theta : [m] \to [n]} (\varphi \otimes \varphi) \left(\prod_{k=1}^m Z_{\theta(k)}\right).
\]

Since $Z_1, \ldots, Z_n$ are identically distributed and satisfy the same independence relations, the value of
\[
(\varphi \otimes \varphi) \left(\prod_{k=1}^m Z_{\theta(k)}\right)
\]
depends only on the partition $\pi = \pi(\theta) \in \P(m)$ whose blocks are $\theta^{-1}(\{k\})$ for $k \in [n]$ and will be denoted $(\varphi \otimes \varphi)(\pi)$.  Thus
\[
(\varphi \otimes \varphi)(S^m_n) = \frac{1}{\delta^m n^{\frac{m}{2}}} \sum_{\pi \in \P(m)} (\varphi \otimes \varphi)(\pi) |\{\theta : [m] \to [n] \, \mid \, \pi = \pi(\theta)\}|.
\]  

For a given $\pi \in \P(m)$, the number of $\theta : [m] \to [n]$ such that $\pi(\theta) = \pi$ is equal to the number of ways we can label (i.e. `colour') the blocks of $\pi$ with distinct elements of $[n]$ and thus is equal to $n(n-1) \cdots (n- |\pi| + 1)$ where $|\pi|$ is the number of blocks of $\pi$.  Hence
\[
(\varphi \otimes \varphi)(S^m_n) = \frac{1}{\delta^m} \sum_{\pi \in \P(m)} (\varphi \otimes \varphi)(\pi) \frac{n(n-1) \cdots (n- |\pi| + 1)}{n^{\frac{m}{2}}}.
\]

We claim that $(\varphi \otimes \varphi)(\pi) = 0$ if $\pi$ has a block with exactly one element.  To see this, assume $\pi \in \P(m)$ has $\{k_0\}$ as a block for some $k_0$.  Note
\begin{align*}
(\varphi \otimes \varphi)(\pi) &= (\varphi \otimes \varphi)\left( Z_{\theta(1)} \cdots Z_{\theta(k_0-1)} A_{\theta(k_0)} B_{\theta(k_0)} Z_{\theta(k_0+1)} \cdots Z_{\theta(m)}\right) \\
& \quad -\lambda^2 (\varphi \otimes \varphi)\left(Z_{\theta(1)} \cdots Z_{\theta(k_0-1)} Z_{\theta(k_0+1)} \cdots Z_{\theta(m)}\right)
\end{align*}
for some $\theta : [m] \to [n]$ where $\theta(k) \neq \theta(k_0)$ for all $k \neq k_0$.  Since $(A_{\theta(k_0)}, 1)$ and $(1, B_{\theta(k_0)})$ are bi-free from each other and the other operators in $Z_{\theta(k)}$ for $k \neq k_0$, by using the bi-free moment-cumulant formula (\ref{eq:moment}), realizing the only non-zero bi-free cumulants occur when $A_{\theta(k_0)}$ and $B_{\theta(k_0)}$ are singletons thereby yielding their expecations, and reversing the bi-free moment-cumulant formulae without $A_{\theta(k_0)}$ and $B_{\theta(k_0)}$, we obtain
\begin{align*}
(\varphi \otimes \varphi) & \left( Z_{\theta(1)} \cdots Z_{\theta(k_0-1)} A_{\theta(k_0)} B_{\theta(k_0)} Z_{\theta(k_0+1)} \cdots Z_{\theta(m)}\right) \\
&= \varphi(a_{\theta(k_0)})\varphi(b_{\theta(k_0)}) (\varphi \otimes \varphi)\left(Z_{\theta(1)} \cdots Z_{\theta(k_0-1)} Z_{\theta(k_0+1)} \cdots Z_{\theta(m)}\right).
\end{align*}
Hence $(\varphi \otimes \varphi)(\pi) = 0$ when $\pi$ has a block with exactly one element.

If $\pi \in \P(m)$ does not contain a block with exactly one element and has a block with at least three elements, then $|\pi| < \frac{m}{2}$ and thus
\[
\lim_{n \to \infty} \frac{n(n-1) \cdots (n- |\pi| + 1)}{n^{\frac{m}{2}}} = 0.
\]
Combining the above, we obtain that
\[
\lim_{n \to \infty} (\varphi \otimes \varphi)(S^m_n) = \frac{1}{\delta^m} \sum_{\pi \in \P_2(m)} (\varphi \otimes \varphi)(\pi)
\]
where $\P_2(m)$ denotes the set of all pair partition on $m$.  

For all $m$ let 
\[
M'_m = \frac{1}{\delta^m} \sum_{\pi \in \P_2(m)} (\varphi \otimes \varphi)(\pi).
\]
To complete the proof, it suffices to prove that $M'_m = 0$ when $m$ is odd, $M'_2 = 1$, and $M'_m$ satisfies the recurrence relation in Corollary \ref{cor:recurrence-relation}.  Note clearly $M'_m = 0$ when $m$ is odd as $\P_2(m)$ is empty. 

We will now use bi-free independence to describe $M'_m$.  For notational simplicity, for $s\in \{1, e\}$ let
\[
L_k^s = \begin{cases}
A_k & \text{if $s = 1$} \\
-\lambda & \text{if $s = e$}
\end{cases} \qqand R_k^s = \begin{cases}
B_k & \text{if $s = 1$} \\
\lambda & \text{if $s = e$}
\end{cases}.
\]
For each $\pi\in \P_2(m)$ choose any $\theta_\pi : [m] \to [n]$ such that $\pi(\theta_\pi) = \pi$.  We view $\theta$ as a choice of colouring where two indices $k,j$ are coloured the same exactly when $\{k,j\} \in \pi$.  Using the above notation and bi-freeness, we obtain that
\begin{align*}
M'_m &= \frac{1}{\delta^m} \sum_{\pi \in \P_2(m)} \sum_{s : [m] \to \{1,e\}} (\varphi \otimes \varphi)\left(\prod_{k=1}^m L^{s(k)}_{\theta_\pi(k)}R^{s(k)}_{\theta_\pi(k)}\right) \\
&=  \frac{1}{\delta^m} \sum_{\pi \in \P_2(m)}\sum_{s : [m] \to \{1,e\}}     \sum_{\tau \in \BNC_{\vs}^a(m)} \kappa_\tau\left(L^{s(1)}_{\theta_\pi(1)},R^{s(1)}_{\theta_\pi(1)}, \ldots, L^{s(m)}_{\theta_\pi(m)},R^{s(m)}_{\theta_\pi(m)}\right).
\end{align*}
Instead of thinking of $\tau \in \BNC_{\vs}^a(m)$ as a partition on $1 < 2 < \cdots < 2m$, we will think of $\tau$ as a partition on $1_\ell < 1_r < 2_\ell < 2_r < \cdots < m_\ell < m_r$.  Thus, as $\pi \in \P_2(m)$, $\theta_\pi$ must colour $k_\ell$ and $k_r$ the same colour for all $k$ and each $k_\ell$ is the same colour of exactly one other $i_\ell$.  Thus, as mixed bi-free cumualants vanish, the only $\tau \in \BNC_{\vs}^a(m)$ that need be considered in the sum are those that can be coloured based on $\theta_\pi$.  For $\tau \in \BNC_{\vs}^a(m)$ we use $\tau \prec \pi$ to denote that $\tau$ can be coloured based on $\theta_\pi$.  Thus
\[
M'_m = \frac{1}{\delta^m} \sum_{\pi \in \P_2(m)}\sum_{s : [m] \to \{1,e\}}     \sum_{\substack{\tau \in \BNC_{\vs}^a(m) \\ \tau \prec \pi}} \kappa_\tau\left(L^{s(1)}_{\theta_\pi(1)},R^{s(1)}_{\theta_\pi(1)}, \ldots, L^{s(m)}_{\theta_\pi(m)},R^{s(m)}_{\theta_\pi(m)}\right).
\]
Therefore since $\{L_k^s, R_k^s \, \mid \, s \in \{1,e\}\}$ are identically distributed over $k$, we obtain that
\[
M'_m = \frac{1}{\delta^m} \sum_{\pi \in \P_2(m)} \sum_{\substack{\tau \in \BNC_{\vs}^a(m) \\ \tau \prec \pi}} \sum_{s : [m] \to \{1,e\}}  \kappa_\tau\left(L^{s(1)},R^{s(1)}, \ldots, L^{s(m)},R^{s(m)}\right)
\]
where $L^s = L_1^s$ and $R^s = R_1^s$ for all $s \in \{1,e\}$.

To see that $M'_2 = 1$, note
\begin{align*}
M'_2 &= \frac{1}{\delta^2} \sum_{\tau \in \BNC_{\vs}^a(2)} \sum_{s_1, s_2 \in \{1,e\}} \kappa_\tau\left(L^{s_1},R^{s_1}, L^{s_2},R^{s_2}\right)
\end{align*}
(i.e. the only pair partition on $[2]$ colours all nodes the same).  Note there are four elements of  $\BNC_{\vs}^a(2)$:
\begin{align*}
\tau_0 &= \{\{1_\ell\}, \{2_\ell\}, \{1_r\}, \{2_r\}\}, \\
\tau_\ell &= \{\{1_\ell, 2_\ell\}, \{1_r\}, \{2_r\}\},\\
\tau_r &= \{\{1_\ell\}, \{2_\ell\}, \{1_r, 2_r\}\}, \text{ and}\\
\tau_{\ell r } &= \{\{1_\ell, 2_\ell\}, \{1_r, 2_r\}\}.
\end{align*}
Since bi-free cumulants of order 1 are the just the moments and bi-free cumulants of order at least 2 involving scalars vanish by \cite{CNS2015-1}*{Proposition 6.4.1}, it is elementary to verify that
\begin{align*}
\sum_{s_1, s_2 \in \{1,e\}} \kappa_{\tau_0}\left(L^{s_1},R^{s_1}, L^{s_2},R^{s_2}\right) &= 0, \\
\sum_{s_1, s_2 \in \{1,e\}} \kappa_{\tau_\ell}\left(L^{s_1},R^{s_1}, L^{s_2},R^{s_2}\right)  &= \kappa_2(A_1) \kappa_1(B_1)^2 = \sigma^2\lambda^2, \\
\sum_{s_1, s_2 \in \{1,e\}} \kappa_{\tau_r}\left(L^{s_1},R^{s_1}, L^{s_2},R^{s_2}\right)  &= \kappa_1(A_1)^2 \kappa_2(B_1) = \lambda^2\sigma^2, \text{ and} \\
\sum_{s_1, s_2 \in \{1,e\}} \kappa_{\tau_{\ell r}}\left(L^{s_1},R^{s_1}, L^{s_2},R^{s_2}\right)  &= \kappa_2(A_1)\kappa_2(B_1) = \sigma^4.
\end{align*}
Thus
\[
M'_2 = \frac{1}{\delta^2} \left( \sigma^2\lambda^2 + \lambda^2\sigma^2  + \sigma^4 \right) = 1.
\]

To see that $M'_m$ satisfies the recurrence relation in Corollary \ref{cor:recurrence-relation}, we will apply a similar trick to that used in Corollary \ref{cor:recurrence-relation} by summing over $\pi \in \P_2(m)$ with the same block containing 1.  For a fixed $p \in [m]$ with $p \neq 1$, consider all $\pi \in \P_2(m)$ that contain the block $\{1,p\}$.  We will further divide this collection into two subcollections and sum bi-free cumulants over the subcollections separately.

First, consider all $\pi \in \P_2(m)$ that contain the block $\{1,p\}$ such that $\{k,k'\} \notin \pi$ for all $1 < k < p < k'$.  Thus there is a bijection between such $\pi$ and pairs of pair partitions $\pi_1$ and $\pi_2$ on $\{2, \ldots, p-1\}$ and $\{p+1, \ldots, m\}$ via $\pi_1 = \pi|_{\{2, \ldots, p-1\}}$ and $\pi_2 = \pi|_{\{p+1, \ldots, m\}}$.  Moreover, there is a bijection between $\tau \in \BNC^a_{\vs}(m)$ with $\tau \prec \pi$ and triples $(\tau', \tau_1, \tau_2)$ where $\tau' \in \{\tau_0, \tau_\ell, \tau_r, \tau_{\ell r}\}$, $\tau_1$ is any element of $\BNC^a_{\vs}(p-2)$ such that $\tau_1 \prec \pi_1$, and $\tau_2$ is any element of $\BNC^a_{\vs}(m-p)$ such that $\tau_2 \prec \pi_2$, via the map that sends $\tau$ to $\tau' = \tau|_{\{1_\ell, 1_r,p_\ell,p_r\}}$, $\tau_1 = \tau|_{\{2_\ell, \ldots, (p-1)_\ell, 2_r, \ldots, (p-1)_r\}}$, and $\tau_2 = \tau|_{\{(p+1)_\ell, \ldots, m_\ell, (p+1)_r, \ldots, m_r\}}$.  The following figure shows an example of this decomposition when $\pi = \{\{1,6\}, \{2,5\}, \{3,4\}, \{7,8\}, \{9,10\}\}$ so $p = 6$.

\begin{align*}
\begin{tikzpicture}[baseline]
	\draw[thick, dashed] (-1,4) -- (-1,-1) -- (1, -1) -- (1,4);
	\draw[thick, red] (1, 2.5) ellipse (.5 and 1);
	\node[right] at (1.5, 2.5) {\color{red}$\tau_1$};
	\draw[thick, red] (-1, 2.5) ellipse (.5 and 1);
	\node[left] at (-1.5, 2.5) {\color{red}$\tau_1$};
	\draw[thick, olive] (1, 0) ellipse (.5 and 1);
	\node[right] at (1.5, 0) {\color{olive}$\tau_2$};
	\draw[thick, olive] (-1, 0) ellipse (.5 and 1);
	\node[left] at (-1.5, 0) {\color{olive}$\tau_2$};
	\draw[fill=green, green] (-1,-0.75) circle (0.075);
	\node[left] at (-1, -0.75) {\color{green}$10_\ell$};
	\draw[fill=green, green] (-1,-0.25) circle (0.075);
	\node[left] at (-1, -0.25) {\color{green}$9_\ell$};
	\draw[fill=cyan, cyan] (-1, 0.25) circle (0.075);
	\node[left] at (-1, 0.25) {\color{cyan}$8_\ell$};
	\draw[fill=cyan, cyan] (-1, 0.75) circle (0.075);
	\node[left] at (-1, 0.75) {$\color{cyan}7_\ell$};	
	\draw[fill=blue, blue] (-1, 1.25) circle (0.075);
	\node[left] at (-1, 1.25) {\color{blue}$6_\ell$};
	\draw[fill=magenta, magenta] (-1, 1.75) circle (0.075);
	\node[left] at (-1, 1.75) {\color{magenta}$5_\ell$};	
	\draw[fill=violet, violet] (-1, 2.25) circle (0.075);
	\node[left] at (-1, 2.25) {\color{violet}$4_\ell$};
	\draw[fill=violet, violet] (-1, 2.75) circle (0.075);
	\node[left] at (-1, 2.75) {\color{violet}$3_\ell$};	
	\draw[fill=magenta, magenta] (-1, 3.25) circle (0.075);
	\node[left] at (-1, 3.25) {\color{magenta}$2_\ell$};
	\draw[fill=blue, blue] (-1, 3.75) circle (0.075);
	\node[left] at (-1, 3.75) {\color{blue}$1_\ell$};	
	\draw[fill=green, green] (1, -0.75) circle (0.075);
	\node[right] at (1, -0.75) {\color{green}$10_r$};
	\draw[fill=green, green] (1, -0.25) circle (0.075);
	\node[right] at (1, -0.25) {\color{green}$9_r$};
	\draw[fill=cyan, cyan] (1, 0.25) circle (0.075);
	\node[right] at (1, 0.25) {\color{cyan}$8_r$};
	\draw[fill=cyan, cyan] (1, 0.75) circle (0.075);
	\node[right] at (1, 0.75) {\color{cyan}$7_r$};	
	\draw[fill=blue, blue] (1, 1.25) circle (0.075);
	\node[right] at (1, 1.25) {\color{blue}$6_r$};
	\draw[fill=magenta, magenta] (1, 1.75) circle (0.075);
	\node[right] at (1, 1.75) {\color{magenta}$5_r$};	
	\draw[fill=violet, violet] (1, 2.25) circle (0.075);
	\node[right] at (1, 2.25) {\color{violet}$4_r$};
	\draw[fill=violet, violet] (1, 2.75) circle (0.075);
	\node[right] at (1, 2.75) {\color{violet}$3_r$};	
	\draw[fill=magenta, magenta] (1, 3.25) circle (0.075);
	\node[right] at (1, 3.25) {\color{magenta}$2_r$};
	\draw[fill=blue, blue] (1, 3.75) circle (0.075);
	\node[right] at (1, 3.75) {\color{blue}$1_r$};	
	\draw[dotted, thick, blue] (-1,3.75) -- (-.25, 3.75) -- (-.25, 1.25) -- (-1, 1.25);
	\draw[dotted, thick, blue] (1,3.75) -- (.25, 3.75) -- (.25, 1.25) -- (1, 1.25);
	\node at (0, 3.8) {\color{blue}$\tau'$};
\end{tikzpicture}
\end{align*}

Therefore, since
\[
\kappa_\tau\left(L^{s(1)},R^{s(1)}, \ldots, L^{s(m)},R^{s(m)}\right)
\]
will be the product of the corresponding bi-free cumulant terms for $\tau'$, $\tau_1$, and $\tau_2$, we obtain 
\[
\left( \sigma^2\lambda^2 + \lambda^2\sigma^2  + \sigma^4 \right) (\delta^{p-2}  M'_{p-2})  ( \delta^{m-p} M'_{m-p})  = \delta^m M'_{p-2} M'_{m-p}
\]
(where the terms come from $\tau'$, $\tau_1$, and $\tau_2$ respectively) by summing
\[
 \sum_{\substack{\tau \in \BNC_{\vs}^a(m) \\ \tau \prec \pi}} \sum_{s : [m] \to \{1,e\}}  \kappa_\tau\left(L^{s(1)},R^{s(1)}, \ldots, L^{s(m)},R^{s(m)}\right)
\]
over all $\pi \in \P_2(m)$ that contain the block $\{1,p\}$ such that $\{k,j\} \notin \pi$ for all $1 < k < p < j$.  Therefore, by summing over all possible $p$, the first term in the recurrence relation in Corollary \ref{cor:recurrence-relation} is obtained.

To complete the computation of $M'_m$, we need only consider $\pi \in \P_2(m)$ that contain the block $\{1,p\}$ such that there exists $1 < k < p < k'$ with $\{k,k'\} \in \pi$.  To aid in the comprehension of this argument, we will refer to the following figure as an example where $m = 16$, $p=13$, and
\[
\pi = \{\{1,13\}, \{2,3\}, \{4,12\}, \{5,6\}, \{7,16\}, \{8,11\}, \{9,10\}, \{14,15\}\}.
\]
\begin{align*}
\begin{tikzpicture}[baseline]
	\draw[thick, dashed] (-1,4) -- (-1,-4.25) -- (1, -4.25) -- (1,4);
	\draw[thick] (-1, 3.75) -- (-0.2, 3.75) -- (-0.2, -2.25) -- (-1,-2.25);
	\draw[thick] (-1, 2.25) -- (-0.6, 2.25) -- (-0.6, -1.75) -- (-1, -1.75);
	\draw[thick] (1, -3.75) -- (0.2, -3.75) -- (0.2, 0.75) -- (1, 0.75);
	\draw[dotted, gray, thick] (0, 3) ellipse (2 and 0.5);
	\node[right] at (2, 3) {\color{gray} $\tau_1 \mapsto M'_2$};
	\draw[dotted, gray, thick] (0, 1.5) ellipse (2 and 0.5);
	\node[right] at (2, 1.5) {\color{gray} $\tau_2 \mapsto M'_2$};
	\draw[dotted, gray, thick] (0, -0.5) ellipse (2 and 1);
	\node[right] at (2, -0.5) {\color{gray} $\tau_3 \mapsto M'_4$};
	\draw[dotted, gray, thick] (0, -2) ellipse (2 and 0.1);
	\node[right] at (2, -2) {\color{gray} $\tau_4 \mapsto M'_0$};
	\draw[dotted, gray, thick] (0, -3) ellipse (2 and 0.5);
	\node[right] at (2, -3) {\color{gray} $\tau_5 \mapsto M'_2$};
	\draw[dotted, gray, thick] (0, -4) ellipse (2 and 0.1);
	\node[right] at (2, -4) {\color{gray} $\tau_6 \mapsto M'_0$};
	\draw[fill=red, red] (-1,-3.75) circle (0.075);
	\node[left] at (-1, -3.75) {\color{red}$16_\ell$};
	\draw[fill=purple, purple] (-1,-3.25) circle (0.075);
	\node[left] at (-1, -3.25) {\color{purple}$15_\ell$};
	\draw[fill=purple, purple] (-1,-2.75) circle (0.075);
	\node[left] at (-1, -2.75) {\color{purple}$14_\ell$};
	\draw[fill=blue, blue] (-1,-2.25) circle (0.075);
	\node[left] at (-1, -2.25) {\color{blue}$13_\ell$};
	\draw[fill=violet, violet] (-1,-1.75) circle (0.075);
	\node[left] at (-1, -1.75) {\color{violet}$12_\ell$};
	\draw[fill=green, green] (-1,-1.25) circle (0.075);
	\node[left] at (-1, -1.25) {\color{green}$11_\ell$};
	\draw[fill=olive, olive] (-1,-0.75) circle (0.075);
	\node[left] at (-1, -0.75) {\color{olive}$10_\ell$};
	\draw[fill=olive, olive] (-1,-0.25) circle (0.075);
	\node[left] at (-1, -0.25) {\color{olive}$9_\ell$};
	\draw[fill=green, green] (-1, 0.25) circle (0.075);
	\node[left] at (-1, 0.25) {\color{green}$8_\ell$};
	\draw[fill=red, red] (-1, 0.75) circle (0.075);
	\node[left] at (-1, 0.75) {$\color{red}7_\ell$};	
	\draw[fill=cyan,cyan] (-1, 1.25) circle (0.075);
	\node[left] at (-1, 1.25) {\color{cyan}$6_\ell$};
	\draw[fill=cyan, cyan] (-1, 1.75) circle (0.075);
	\node[left] at (-1, 1.75) {\color{cyan}$5_\ell$};	
	\draw[fill=violet, violet] (-1, 2.25) circle (0.075);
	\node[left] at (-1, 2.25) {\color{violet}$4_\ell$};
	\draw[fill=magenta, magenta] (-1, 2.75) circle (0.075);
	\node[left] at (-1, 2.75) {\color{magenta}$3_\ell$};	
	\draw[fill=magenta, magenta] (-1, 3.25) circle (0.075);
	\node[left] at (-1, 3.25) {\color{magenta}$2_\ell$};
	\draw[fill=blue, blue] (-1, 3.75) circle (0.075);
	\node[left] at (-1, 3.75) {\color{blue}$1_\ell$};	
	\draw[fill=red, red] (1,-3.75) circle (0.075);
	\node[right] at (1, -3.75) {\color{red}$16_r$};
	\draw[fill=purple, purple] (1,-3.25) circle (0.075);
	\node[right] at (1, -3.25) {\color{purple}$15_r$};
	\draw[fill=purple, purple] (1,-2.75) circle (0.075);
	\node[right] at (1, -2.75) {\color{purple}$14_r$};
	\draw[fill=blue, blue] (1,-2.25) circle (0.075);
	\node[right] at (1, -2.25) {\color{blue}$13_r$};
	\draw[fill=violet, violet] (1,-1.75) circle (0.075);
	\node[right] at (1, -1.75) {\color{violet}$12_r$};
	\draw[fill=green, green] (1,-1.25) circle (0.075);
	\node[right] at (1, -1.25) {\color{green}$11_r$};
	\draw[fill=olive, olive] (1, -0.75) circle (0.075);
	\node[right] at (1, -0.75) {\color{olive}$10_r$};
	\draw[fill=olive, olive] (1, -0.25) circle (0.075);
	\node[right] at (1, -0.25) {\color{olive}$9_r$};
	\draw[fill=green, green] (1, 0.25) circle (0.075);
	\node[right] at (1, 0.25) {\color{green}$8_r$};
	\draw[fill=red, red] (1, 0.75) circle (0.075);
	\node[right] at (1, 0.75) {\color{red}$7_r$};	
	\draw[fill=cyan,cyan] (1, 1.25) circle (0.075);
	\node[right] at (1, 1.25) {\color{cyan}$6_r$};
	\draw[fill=cyan, cyan] (1, 1.75) circle (0.075);
	\node[right] at (1, 1.75) {\color{cyan}$5_r$};	
	\draw[fill=violet, violet] (1, 2.25) circle (0.075);
	\node[right] at (1, 2.25) {\color{violet}$4_r$};
	\draw[fill=magenta, magenta] (1, 2.75) circle (0.075);
	\node[right] at (1, 2.75) {\color{magenta}$3_r$};	
	\draw[fill=magenta, magenta] (1, 3.25) circle (0.075);
	\node[right] at (1, 3.25) {\color{magenta}$2_r$};
	\draw[fill=blue, blue] (1, 3.75) circle (0.075);
	\node[right] at (1, 3.75) {\color{blue}$1_r$};	
\end{tikzpicture}
\end{align*}

Let $C$ be the connected component of the intersection graph of $\pi$ containing $\{1,p\}$ and let $I$ denote the set of all indices contained in the blocks in $C$.  Then $|I| = 2j$ for some $j \geq 2$ and $\pi|_{I} \in \P_2^{\mathrm{con}}(2j)$.  For the above figure, $C$ is the graph with vertices $\{1,13\}$, $\{4,12\}$, $\{7,16\}$ where the only edges are from $\{7,16\}$ to $\{1,13\}$ and from $\{7,16\}$ to $\{4,12\}$, $\pi_I = \{\{1,13\}, \{4,12\}, \{7,16\} \}$, and $j = 3$.

Suppose $\{k,k'\}, \{h,h'\} \in \pi$ are such that $k < h < k' < h'$.  For $\tau \in \BNC^a_{\vs}(m)$ such that $\tau \prec \pi$, note that
\[
\sum_{s_1, s_2 \in \{1,e\}} \kappa_\tau\left(L^{s(1)},R^{s(1)}, \ldots, L^{s(m)},R^{s(m)}\right) = 0
\]
when $\tau|_{\{k_\ell, k_r,k'_\ell,k'_r\}} = \tau_0$ and when $\tau|_{\{k_\ell, k_r,k'_\ell,k'_r\}} = \tau_{\ell r}$ as this latter condition implies $\tau|_{\{h_\ell, h_r,h'_\ell,h'_r\}} = \tau_0$ since $\tau$ is bi-non-crossing.  Moreover, since $\tau$ is bi-non-crossing, it is not possible that $\tau|_{\{k_\ell, k_r,k'_\ell,k'_r\}}$ and $\tau|_{\{h_\ell, h_r,h'_\ell,h'_r\}}$ are both $\tau_\ell$ or both $\tau_r$.  Hence the sum can only be non-zero if $ \{\tau|_{\{k_\ell, k_r,k'_\ell,k'_r\}}, \tau|_{\{h_\ell, h_r,h'_\ell,h'_r\}}  \}=  \{\tau_\ell, \tau_r\}$.  Therefore, if $\pi|_{I}$ is not bipartite, then the above sum is 0 for all $\tau \in \BNC^a_{\vs}(m)$ such that $\tau \prec \pi$.  Hence we need only consider $\pi$ such that $\pi|_{I} \in \P_2^{\mathrm{bicon}}(2j)$ for the remainder of our computations.  Moreover, when $\pi|_{I} \in \P_2^{\mathrm{bicon}}(2j)$ there are exactly two possible options for $\tau|_{\{i_\ell, i_r\}_{i \in I}}$ for the above sum to be non-zero (i.e. $\tau|_{\{1_\ell, 1_r, p_\ell, p_r\}}$ is either $\tau_\ell$ or $\tau_r$ and we alternate over the bipartite graph $C$).  In the above figure, the solid lines show one of the two options where $\tau_\ell$ occurs for $\{1,13\}$.  Note in the above figure that it is not possible for an index in the second oval from the top to be the same colour as an index in the second oval from the bottom as this would result in $C$ being not bi-partite.

Write $I = \{1 = i_1 < i_2 < \cdots < i_{2j}\}$ and let $k_h = i_{h+1} - i_{h}-1$ for $1 \leq h \leq 2j$ where $i_{2j+1} = m+1$.  Then $0 \leq k_1, \ldots, k_{2j} \leq m-2j$ and $k_1 + \cdots + k_{2j} = m-2j$.  For the above figure, $i_2 = 4$, $i_3 = 7$, $i_4 = 12$, $i_5 = 13$, and $i_6 = 16$ so $k_1 = 2$, $k_2 = 2$, $k_3 = 4$, $k_4 = 0$, $k_5 = 2$, and $k_6 = 0$.

We now desire to sum
\[
\sum_{\substack{\tau \in \BNC_{\vs}^a(m) \\ \tau \prec \pi}} \sum_{s : [m] \to \{1,e\}}  \kappa_\tau\left(L^{s(1)},R^{s(1)}, \ldots, L^{s(m)},R^{s(m)}\right)
\]
over all $\pi$ with the same $C$ (and thus same $I$).  Note there are exactly 2 possible options for $\tau|_{\{i_\ell, i_r\}_{i \in I}}$.  Moreover, for since $\pi|_{\{i_h+1, \ldots, i_{h+1}-1\}}$ is a pair partition for each $k$, we see that $\tau|_{\{(i_h+1)_\ell, \ldots, (i_{h+1}-1)_\ell, (i_h+1)_r, \ldots, (i_{h+1}-1)_r \}}$ can be any element of $\BNC_{\vs}^a(k_h)$.  In the above figure, these $2j$ partitions are represented by the oval regions.

By summing over all $\BNC_{\vs}^a(k_h)$ and then over the options of $\tau|_{\{i_\ell, i_r\}_{i \in I}}$, we obtain that the above sum is
\[
2  \left(\sigma^2 \lambda^2\right)^j (\delta^{k_1}M'_{k_1}) \cdots  (\delta^{k_{2j}}M'_{k_{2j}}) = 2 \delta^{m} \left(\frac{\sigma^2 \lambda^2}{\delta^2}\right)^{j} M'_{k_1} \cdots M'_{k_{2j}}
\]
where the 2 comes from the two options of $\tau|_{\{i_\ell, i_r\}_{i \in I}}$, a $\sigma^2 \lambda^2$ occurs for each $\tau_\ell$ and $\tau_r$ that occurs in $\tau|_{\{i_\ell, i_r\}_{i \in I}}$ (so there are $\frac{|I|}{2} = j$ occurrences), and each $M'_{k_h}$ occurs from the sum over all options of $\tau|_{\{(i_h+1)_\ell, \ldots, (i_{h+1}-1)_\ell, (i_h+1)_r, \ldots, (i_{h+1}-1)_r \}}$.  Hence, by summing
\[
\sum_{\substack{\tau \in \BNC_{\vs}^a(m) \\ \tau \prec \pi}} \sum_{s : [m] \to \{1,e\}}  \kappa_\tau\left(L^{s(1)},R^{s(1)}, \ldots, L^{s(m)},R^{s(m)}\right)
\]
over all $\pi$ with the same $I$, we obtain 
\[
2 \delta^{m} \left(\frac{\sigma^2 \lambda^2}{\delta^2}\right)^{j} |\P_2^{\mathrm{bicon}}(2j) | M_{k_1} \cdots M_{k_{2j}}
\]
as there are $|\P_2^{\mathrm{bicon}}(2j)|$ options for $\pi|_I$.    Therefore, since
\[
\frac{\sigma^2\lambda^2}{\delta^2} = \frac{q}{2},
\]
by summing over all options of $I$, we obtain that
\begin{align*}
M'_m &=  \frac{1}{\delta^m} \sum_{\pi \in \P_2(m)} \sum_{\substack{\tau \in \BNC_{\vs}^a(m) \\ \tau \prec \pi}} \sum_{s : [m] \to \{1,e\}}  \kappa_\tau\left(L^{s(1)},R^{s(1)}, \ldots, L^{s(m)},R^{s(m)}\right)\\
&= \sum_{\substack{0 \leq k_1, k_2 \leq m-2 \\ k_1 + k_2 = m-2}} M'_{k_1} M'_{k_2} + \sum_{j \geq 2} \sum_{\substack{0 \leq k_1, \ldots, k_{2j} \leq m-2j \\ k_1 + \cdots + k_{2j} = m-2j}} 2\left(\frac{q}{2}\right)^{j} |\P_2^{\mathrm{bicon}}(2j) | M_{k_1} \cdots M_{k_{2j}}.
\end{align*}
Hence the result follow from Corollary \ref{cor:recurrence-relation}.
\end{proof}

\end{document}